\crefname{assumption}{Assumption}{Assumptions}
\crefname{openproblem}{Open Problem}{Open Problem}
\crefname{notation}{Notation}{Notation}
\DeclareMathOperator{\epi}{epi}
\DeclareMathOperator{\CH}{Conv}
\DeclareMathOperator{\spann}{span}
\newcommand*{\cS}{\mathcal{S}}
\newcommand*{\cX}{\mathcal{X}}
\newcommand*{\cY}{\mathcal{Y}}
\newcommand*{\cZ}{\mathcal{Z}}
\newcommand*{\bR}{\mathbb{R}}
\newcommand*{\ebR}{\overline{\mathbb{R}}}
\newcommand*{\bN}{\mathbb{N}}
\newcommand*{\bE}{\mathbb{E}}
\newcommand*{\bP}{\mathbb{P}}
\newcommand*{\cE}{\mathcal{E}}
\newcommand*{\cF}{\mathcal{F}}
\newcommand*{\cH}{\mathcal{H}}
\newcommand*{\cG}{\mathcal{G}}
\newcommand*\quark{\mathpalette\quark@{.5}}
\newcommand*\quark@[2]{\mathbin{\vcenter{\hbox{\scalebox{#2}{$\; \m@th#1\bullet \;$}}}}}
\newcommand{\mylabel}[2]{#2\def\@currentlabel{#2}\label{#1}}
\newcommand*{\defterm}{\textit}
\DeclarePairedDelimiterX{\infdivx}[2]{(}{)}{#1\;\delimsize\|\;#2}
\definecolor{darkgreen}{rgb}{0,0.4,0}
\newcommand*{\todo}[1]{\bgroup\color{red}TODO:~#1\egroup}
\newcommand*{\pk}[1]{\bgroup\color{orange}HL:~#1\egroup}
\newcommand*{\ik}[1]{\bgroup\color{cyan}IK:~#1\egroup}
\DeclarePairedDelimiter\abs{\lvert}{\rvert}%
\DeclarePairedDelimiter\mynorm{\lVert}{\rVert}%
\let\oldabs\abs
\def\abs{\@ifstar{\oldabs}{\oldabs*}}
\let\oldnorm\mynorm
\def\mynorm{\@ifstar{\oldnorm}{\oldnorm*}}
\theoremstyle{plain}
\newtheorem{theorem}{\sffamily Theorem}[section]
\newtheorem{lemma}[theorem]{\sffamily Lemma}
\newtheorem{corollary}[theorem]{\sffamily Corollary}
\theoremstyle{definition}
\newtheorem{definition}[theorem]{\sffamily Definition}
\newtheorem{example}[theorem]{\sffamily Example}
\newtheorem{remark}[theorem]{\sffamily Remark}
\newtheorem{assumption}[theorem]{\sffamily Assumption}
\newtheorem{problem}[theorem]{\sffamily Problem}
\newcommand{\norm}[1]{\lVert #1 \rVert}
\newcommand{\set}[2]{\{ #1 \mid #2 \}}
\newcommand{\Set}[2]{\left\{ #1 \,\middle\vert\, #2 \right\}}
\numberwithin{equation}{section}
\numberwithin{figure}{section}
\numberwithin{table}{section}
\newcommand*{\arXiv}[1]{\bgroup\color{blue}\href{https://arxiv.org/abs/#1}{arXiv:#1}\egroup}
\newcommand*{\doi}[1]{\bgroup\color{blue}\href{https://doi.org/#1}{doi:#1}\egroup}
\newcommand*{\email}[1]{\bgroup\color{blue}\href{mailto:#1}{#1}\egroup}
\renewcommand*{\url}[1]{\bgroup\color{blue}\href{#1}{#1}\egroup}
\setlist[enumerate]{nosep}
\setlist[itemize]{nosep}
\renewcommand{\qedsymbol}{$\blacksquare$}
\renewenvironment{proof}[1][\proofname]{\noindent{\bfseries\sffamily #1.} }{\hfill\qedsymbol\medskip}
\let\oldtitle\title
\renewcommand{\title}[1]{\oldtitle{#1}\newcommand{\theshorttitle}{#1}}
\newcommand{\shorttitle}[1]{\renewcommand{\theshorttitle}{#1}}
\let\oldauthor\author
\renewcommand{\author}[1]{\oldauthor{#1}\newcommand{\theshortauthor}{#1}}
\newcommand{\shortauthor}[1]{\renewcommand{\theshortauthor}{#1}}
\newcommand{\theabstract}[1]{\par\bgroup\noindent\textbf{\textsf{Abstract.}} #1\egroup}
\newcommand{\thekeywords}[1]{\par\smallskip\bgroup\noindent\textbf{\textsf{Keywords.}}\newcommand{\and}{ $\bullet$ } #1\egroup}
\newcommand{\themsc}[1]{\par\smallskip\bgroup\noindent\textbf{\textsf{2020 Mathematics Subject Classification.}}\newcommand{\and}{ $\bullet$ } #1\egroup}
\newcommand*{\affilref}[1]{\ref{affiliation#1}}
\newcommand*{\affiliation}[3]{
	\footnotetext[#1]{\label{affiliation#2}#3}
}
\setlist{topsep=0.3ex, itemsep=0.3ex}
\title{Graph Convex Hull Bounds\\ as generalized Jensen Inequalities}
\shorttitle{Graph Convex Hull Bounds as generalized Jensen Inequalities}
\author{%
	Ilja~Klebanov\textsuperscript{\affilref{FUB}}%
}
\date{\today}
\begin{document}
\maketitle
\affiliation{1}{FUB}{Freie Universit{\"a}t Berlin, Arnimallee 6, 14195 Berlin, Germany (\email{klebanov@zedat.fu-berlin.de})}

\begin{abstract}\small
	\theabstract{Jensen's inequality is ubiquitous in measure and probability theory, statistics, machine learning, information theory and many other areas of mathematics and data science.
It states that, for any convex function $f\colon K \to \mathbb{R}$ defined on a convex domain $K \subseteq \mathbb{R}^{d}$ and any random variable $X$ taking values in $K$, $\mathbb{E}[f(X)] \geq f(\mathbb{E}[X])$.
In this paper, sharp upper and lower bounds on $\mathbb{E}[f(X)]$, termed ``graph convex hull bounds'', are derived for arbitrary functions $f$ on arbitrary domains $K$, thereby extensively generalizing Jensen's inequality.
The derivation of these bounds necessitates the investigation of the convex hull of the graph of $f$, which can be challenging for complex functions.
On the other hand, once these inequalities are established, they hold, just like Jensen's inequality, for \emph{any} $K$-valued random variable $X$.
Therefore, these bounds are of particular interest in cases where $f$ is relatively simple and $X$ is complicated or unknown.
Both finite- and infinite-dimensional domains and codomains of $f$ are covered as well as analogous bounds for conditional expectations and Markov operators.}
	\thekeywords{{Jensen's inequality}%
\and%
{Convex Hull}%
\and%
{non-convex functions}%
\and%
{Markov operators}%
\and%
{conditional expectation}%
\and%
{Hahn--Banach separation theorem}
}
	\themsc{26D15  
\and
28B05  
\and
46A55  
\and
52A40  
\and
60B11  
\and
37A30  

}
\end{abstract}


\section{Introduction}
\label{section:Introduction}

In many theoretical and practical derivations, it is challenging to exactly compute the mean $\bE[f(X)]$ of a function $f\colon K \to \bR$, where $K \subseteq \bR^{d}$ and $d\in\bN$, applied to a $K$-valued random variable $X$.
Thus, one has to rely on bounds on $\bE[f(X)]$.
This is particularly the case when $X$ is only known in terms of its mean $\bE[X]$, requiring dependence solely on the knowledge of $\bE[X]$ and $f$, leading to the following general problem addressed in this paper:
\begin{problem}
\label{problem:E_fX_bounds}
Establish (sharp) bounds on the mean $\bE[f(X)]$ given $\bE[X]$ and $f$.
\end{problem}
In the finite-dimensional setup described above,
Jensen's inequality \citep{Jensen1906convexes} offers partial answers for convex functions $f$ on convex domains $K$, asserting that
\begin{equation}
\label{equ:standard_Jensen}
\bE[f(X)] \geq f(\bE[X])
\end{equation}
(without specifying an upper bound).
While Jensen's inequality is a foundational results in measure theory and is widely utilized in probability theory, statistics, information theory, statistical physics, and many other research areas, it exhibits several crucial limitations and restrictions.
The aim of this paper is to derive new bounds on $\bE[f(X)]$, termed \emph{graph convex hull bounds}, that address these shortcomings, in particular:
\begin{itemize}
\item 
Jensen's inequality says nothing about functions $f$ that are neither convex nor concave, while the graph convex hull bounds hold for arbitrary functions.
\item 
A significant limitation of Jensen's inequality is its requirement for the function $f$ to be defined over a \emph{convex} domain $K$.
Our graph convex hull bounds, on the other hand, are free from such constraints on the domain of $f$, allowing for even disconnected domains as illustrated in \Cref{ex:convex_hull_2} and \Cref{fig:Jensen_generalized}, enhancing their versatility.
\item 
Jensen's inequality does not offer any form of ``reverse'' bounds of the forms $\bE[f(X)] \leq s + f(\bE[X])$ or $\bE[f(X)] \leq c f(\bE[X])$ for some $s = s(X) \geq 0$, $c = c(X) \geq 1$, a feature that could be invaluable in various mathematical and applied contexts and can certainly be obtained for specific functions or function classes \citep{Wunder2021reverse,Dragomir2013reverse,Khan2020converses,Dragomir2001reverse}.
The minimal value $s = s(X) \geq 0$ which satisfies above property is called the \emph{Jensen gap} \citep{Abramovich2016gap,Ullah2021gap,Khan2020gap}.
The graph convex hull bounds provide sharp bounds on $\bE[f(X)]$ both from above and below.
However, unlike the Jensen gap, our bounds depend on $X$ only through $\bE[X]$ (cf.\ \Cref{problem:E_fX_bounds}), hence they are more generally applicable but can only be sharp in a weaker sense.
\item
For domains $K$ within infinite-dimensional topological vector spaces $\cX$, Jensen's inequality requires $f$ to meet certain continuity conditions \citep{Perlman1974Jensen}, a stipulation not needed by our graph convex hull bounds.
\end{itemize}

The graph convex hull bounds are obtained by exploiting the basic fact that the mean of the pair $(X,f(X))$ lies in the closure $\overline{\CH(\cG(f))}$ of the convex hull of the graph $\cG(f)$ of $f$, cf.\ \Cref{cor:mean_in_convex_hull_of_graph} and \Cref{fig:Jensen_generalized} below.
This can be shown by a simple application of the Hahn--Banach separation theorem
--- 
characterize $\overline{\CH(\cG(f))}$ as an intersection of half-spaces and note that the expected value $\bE$ as an operator ``respects'' each half-space, since it is linear, positive and $\bE[Z] = Z$ for constant $Z$.
While the graph convex hull bounds in concrete applications require some investigation of $\overline{\CH(\cG(f))}$ (note that Jensen's inequality also requires some analysis of $f$, namely the verification of its convexity), all inequalities are derived solely from the properties of $f$ and hold, just like Jensen's inequality, for \emph{any} random variable $X$ no matter its probability distribution.
This property is crucial because, in practice, the function $f$ is often sufficiently simple to analyze, at least numerically, whereas the distribution of $X$ might be complex, unknown, or one requires a priori bounds that hold \emph{uniformly} for all random variables taking values in $K$.

Note that the three properties of $\bE$ mentioned above for the derivation of $(\bE[X],\bE[f(X)]) \in \overline{\CH(\cG(f))}$ are of algebraic nature and have little to do with measure or integration theory.
Hence, the graph convex hull bounds can be extended to a broader class of linear operators beyond expected values, so-called Markov operators, as well as to conditional expectations, which are well-known facts in the case of Jensen's inequality, see e.g.\ \citep[Equation~(1.2.1)]{Bakry2014Markov} and \citep[Theorem~10.2.7]{Dudley2002real}.
Once these three versions of the graph convex hull bounds are proven, the corresponding versions of Jensen's inequality follow as simple corollaries, providing a novel and considerably simpler proof of this famous result (cf.\ \Cref{rem:our_derivation_of_Jensen_is_simpler}).
By working in a more general setup than is typical for Markov operators (\Cref{rem:def_of_Markov_operators_typical_assumptions}), we also strongly generalize the corresponding version of Jensen's inequality.

The paper is structured as follows.
After laying out the general setup and notation in \Cref{section:Setup},
the graph convex hull bounds are derived for expected values (\Cref{section:Results}), Markov operators (\Cref{section:Markov_Operators}), and conditional expectations (\Cref{section:Conditional_Results}).
The corresponding versions of Jensen's inequality follow as \Cref{cor:Jensen_follows_from_our_bounds,cor:Jensen_follows_from_our_bounds_Markov,cor:Jensen_follows_from_our_bounds_conditional} in their respective sections.

\section{Preliminaries}
\label{section:Setup}

In order to formulate the results within the most general setting possible, we assume that $\cX$ and $\cY$ are both real, Hausdorff, and locally convex topological vector spaces and $f \colon K\to \cY$, where $K\subseteq \cX$ is an arbitrary domain.
However, readers are encouraged to conceptualize $\cX = \bR^{d}$ and $\cY = \bR$ for simplicity.
Furthermore, $(\Omega,\Sigma,\bP)$ will denote a probability space, and $X \colon \Omega \to \cX$ will represent a function such that $(X,f(X))$ is weakly integrable\footnote{As is typical for random variables $X$, we write $f(X)$ for the composition $f\circ X$; and extend this notation to maps $X$ that are not measurable.}.
Therefore, all expected values are interpreted in the weak sense, which implies that $X$ does not necessarily need to be measurable, that is, a random variable.
Recall that a map $X \colon \Omega \to \cX$ is called \defterm{weakly integrable} if there exists $e\in\cX$ such that, for any $\ell \in \cX'$, $\ell(X) \in L^{1}(\bP)$ and $\ell(e) = \int_{\Omega} \ell(X)\, \mathrm{d} \bP$, in which case the expected value of $X$ is defined by $\bE[X] \coloneqq \int_{\Omega} X\, \mathrm{d} \bP \coloneqq e$ \citep[Definition~3.26]{Rudin1991functional}.
Here, $\cX'$ denotes the continuous dual of $\cX$.
In particular, each Pettis or Bochner integrable map is weakly integrable, and readers may consider Bochner integrable random variables $X$ for simplicity.
When working with Markov operators in \Cref{section:Markov_Operators}, we will not impose any measurability or integrability assumptions on $(X,f(X))$, while for conditional expectations in \Cref{section:Conditional_Results} we will require $\cX$ and $\cY$ to be Banach spaces and $(X,f(X))$ to be Bochner integrable.
Recall that a random variable $X\colon (\Omega,\Sigma,\bP) \to\cX$ is Bochner or strongly integrable if and only if $X$ is measurable and $\int_{\cX} \norm{X}_{\cX} \, \mathrm{d} \bP < \infty$ \citep[Theorem~II.2.2]{Diestel1977vectormeasures}, in which case we write $X \in L^{1}(\bP;\cX)$.
The relevant assumptions will be detailed separately in each section, namely in \Cref{assump:general,assump:Markov_operators,assump:conditional}.

Throughout the paper, we use the following notation.
We denote the closure of  a subset $A$ of a real topological vector space by $\overline{A}$ and the convex hull of $A$ by
\begin{equation}
\label{equ:def_convex_hull}
\CH(A) 
\coloneqq	
\bigcap \set{ C \supset A }{C \text{ convex}}
=
\Set{\textstyle \sum_{j=1}^{J} w_{j} a_{j}}{J\in\bN,\, a_{j} \in A,\, w_{j} \geq 0,\, \textstyle \sum_{j=1}^{J} w_{j} = 1}.
\end{equation}
Further, $\cG(f) = \set{(x,f(x))}{x\in K}$ denotes the graph of a function $f \colon K \to \cY$ and, for $M \subseteq \cX \times \cY$ and $x\in\cX$,
\[
M|_x \coloneqq \set{y\in \cY}{(x,y)\in M}
\]
(note that $\overline{M}|_x$ and $\overline{M|_x}$ might not coincide).
Finally, while we clearly have in mind inequalities with respect to some total or partial order or preorder on $\cY$, most statements are presented for arbitrary binary relations on $\cY$, which we continue to denote by $\leq$ for readability reasons.
However, in the case where $\cY = \bR$, we consistently assume the canonical total order.
In certain situations, we will extend the space $\cY$ by adding the elements $\pm \infty$, which are presumed to satisfy $-\infty \leq y \leq \infty$ for each $y\in\cY$, and denote $\overline{\cY} \coloneqq \cY \cup \{ \pm \infty \}$, as is typical for the extended real number line $\ebR = [-\infty,\infty] = \bR \cup \{ \pm \infty \}$.
Closed intervals in $\overline{\cY}$ are defined by
$
[a,b]
\coloneqq
\set{y\in\overline{\cY}}{a \leq y \leq b},
\ 
a,b \in \overline{\cY}.
$
We now state a simple preliminary lemma that we will use to derive Jensen's inequality from the graph convex hull bounds:
\begin{lemma}
\label{lemma:for_Jensen}
Let $f\colon K \to \bR$ be a convex function on a convex domain $K\subseteq \bR^{d}$.
Then its epigraph $\epi(f) \coloneqq \set{(x,y)\in K\times \bR}{y \geq f(x)} \supseteq \cG(f)$ is convex and, consequently, $\CH(\cG(f))|_{x} \subseteq [f(x),\infty)$ for each $x \in K$.
If $K = \bR^{d}$, then $\overline{\CH(\cG(f))}|_{x} \subseteq [f(x),\infty)$ for each $x \in K$.
\end{lemma}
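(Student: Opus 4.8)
The plan is to verify the three assertions in turn, the only genuinely delicate point being the passage to the closure in the final claim. First I would prove convexity of the epigraph by a direct computation: given $(x_{1}, y_{1}), (x_{2}, y_{2}) \in \epi(f)$ and $t \in [0,1]$, the point $t(x_{1},y_{1}) + (1-t)(x_{2},y_{2})$ has first coordinate $tx_{1} + (1-t)x_{2} \in K$ by convexity of $K$, and convexity of $f$ together with $y_{i} \geq f(x_{i})$ gives $f(tx_{1} + (1-t)x_{2}) \leq t f(x_{1}) + (1-t) f(x_{2}) \leq t y_{1} + (1-t) y_{2}$, so the combination again lies in $\epi(f)$. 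The inclusion $\cG(f) \subseteq \epi(f)$ is immediate from $f(x) \geq f(x)$.

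For the second claim I would invoke the characterization of $\CH(\cG(f))$ as the \emph{smallest} convex set containing $\cG(f)$, i.e.\ the first equality in \eqref{equ:def_convex_hull}: since $\epi(f)$ is convex and contains $\cG(f)$, it follows that $\CH(\cG(f)) \subseteq \epi(f)$. Restricting to the fibre over an arbitrary $x \in K$ and using $\epi(f)|_{x} = [f(x),\infty)$ then yields $\CH(\cG(f))|_{x} \subseteq [f(x),\infty)$.

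The hard part is the closure statement, precisely because passing to $\overline{\,\cdot\,}$ may create new points and, as the excerpt already warns, $\overline{M}|_{x}$ and $\overline{M|_{x}}$ need not coincide. My plan is to reduce to the previous step by showing that $\epi(f)$ is itself closed, whence $\overline{\CH(\cG(f))} \subseteq \overline{\epi(f)} = \epi(f)$ and the fibre argument goes through verbatim. Closedness of $\epi(f)$ is equivalent to lower semicontinuity of $f$, and the relevant classical input is that a finite convex function on all of $\bR^{d}$ is automatically continuous; this is exactly where the hypothesis $K = \bR^{d}$ enters, since on a proper convex domain boundary discontinuities of $f$ can push limit points of $\CH(\cG(f))$ strictly below $f(x)$, making the desired inclusion fail. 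I therefore expect the main obstacle to be citing or establishing this continuity/closedness fact cleanly; everything else is a routine application of the definition of the convex hull.
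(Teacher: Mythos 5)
Your proposal is correct and follows essentially the same route as the paper: a direct verification of convexity of $\epi(f)$, the minimality characterization of the convex hull to get $\CH(\cG(f)) \subseteq \epi(f)$, and, for $K = \bR^{d}$, continuity of the finite convex function $f$ (hence closedness of $\epi(f)$) to pass to the closure. The paper cites \citep[Theorem~4.1.3]{Borwein2006convex} for exactly the continuity fact you flag as the main input.
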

\begin{proof}
The convexity of $\epi(f)$ is a well-known result \citep[Theorem~4.1]{Rockafellar1997ConvexAnalysis}.
It follows that $\CH(\cG(f)) \subseteq \epi(f)$ by \eqref{equ:def_convex_hull} and $\CH(\cG(f))|_{x} \subseteq \epi(f)|_{x} = [f(x),\infty)$ for each $x \in K$.
If $K = \bR^{d}$, then $f$ is continuous \citep[Theorem~4.1.3]{Borwein2006convex} and $\epi(f)$ is closed.
Hence, $\overline{\CH(\cG(f))}|_{x} \subseteq \overline{\epi(f)}|_{x} = \epi(f)|_{x} = [f(x),\infty)$ for each $x \in K$.
\end{proof}

\section{Graph Convex Hull Bounds for Expected Values}
\label{section:Results}

Throughout this section, we make the following general assumptions:
\begin{assumption}
	\label{assump:general}
	$(\Omega,\Sigma,\bP)$ is a probability space, $\cX,\cY,\cZ$ are real, Hausdorff, and locally convex topological vector spaces (equipped with their Borel $\sigma$-algebras).
	Furthermore, $\leq$ represents any binary relation on $\cY$, with the stipulation that it defaults to the canonical total order in the case $\cY = \bR$ (hence the notation).
	Finally, $K \subseteq \cX$, $f\colon K \to \cY$ and $X \colon \Omega \to K$ such that the pair $(X,f(X))$ is weakly integrable.
\end{assumption}
The following theorem presents a well-known and intuitive result that the mean $\bE[Z]$ of a random variable $Z$ taking values in a closed convex subset $C$ of
$\cZ$ lies within this set\footnote{Note that this addresses a distinct issue from that explored by Choquet theory, and the Krein--Milman theorem in particular, as discussed in \citep{Phelps2001Choquet}, where a bounded, closed convex set is analyzed in relation to the convex hull of its \emph{extreme points}.
Extreme points will be of no relevance to this paper.}: $\bE[Z] \in C$.
If $\cZ = \bR^{d}$, the closedness assumption is not required.
Its proof utilizes the Hahn--Banach separation theorem.

\begin{theorem}[{mean/barycenter of convex set lies within its closure}]
\label{thm:mean_in_convex_hull}
Let \Cref{assump:general} hold and let $Z\colon \Omega \to A$ be a weakly integrable map taking values in a subset $A\subseteq \cZ$.
Then $\bE[Z] \in \CH(A)$ if $\cZ = \bR^{d}$, $d\in\bN$, and $\bE[Z] \in \overline{\CH(A)}$ in general\footnote{For an example of a random variable $Z$ taking values in a convex subset $C$ of an infinite-dimensional space which satisfies $\bE[Z] \in \overline{C}\setminus C$, see \citep[Remark~3.2]{Perlman1974Jensen}.}.
Further, for any $z \in \CH(A)$ there exist a probability space 
$(\tilde{\Omega},\tilde{\Sigma},\tilde{\bP})$ and a weakly integrable map $\tilde{Z}\colon \tilde{\Omega} \to A$ such that $\bE[\tilde{Z}] = z$.
\end{theorem}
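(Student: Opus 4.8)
The plan is to prove the theorem in two parts, corresponding to its two assertions.

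\textbf{Part 1: the inclusion $\bE[Z] \in \overline{\CH(A)}$ (and $\bE[Z]\in\CH(A)$ when $\cZ=\bR^{d}$).}
First I would observe that $Z$ takes values in $A \subseteq \CH(A) \subseteq \overline{\CH(A)}$, so it suffices to prove that the mean of a weakly integrable map taking values in a \emph{closed convex} set $C \coloneqq \overline{\CH(A)}$ lies in $C$; this is exactly the statement quoted before the theorem. The key tool is the Hahn--Banach separation theorem. I would argue by contradiction: suppose $e \coloneqq \bE[Z] \notin C$. Since $C$ is closed and convex and $\{e\}$ is compact and convex with $e \notin C$, the Hahn--Banach separation theorem for a Hausdorff locally convex space yields a continuous linear functional $\ell \in \cZ'$ and a scalar $\alpha \in \bR$ such that $\ell(e) > \alpha \geq \ell(c)$ for every $c \in C$. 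Now I invoke the definition of weak integrability: applying $\ell$ gives $\ell(Z) \in L^{1}(\bP)$ and $\ell(e) = \int_{\Omega} \ell(Z)\, \rd\bP$. Since $Z(\omega) \in C$ for every $\omega$, we have $\ell(Z(\omega)) \leq \alpha$ pointwise, hence $\ell(e) = \int_{\Omega} \ell(Z)\, \rd\bP \leq \alpha$ because $\bP$ is a probability measure. This contradicts $\ell(e) > \alpha$, proving $e \in C$. For the finite-dimensional refinement $\cZ = \bR^{d}$, I would note that $\CH(A)$ of a subset of $\bR^{d}$ already contains the mean without passing to the closure: by Carath\'eodory's theorem every point of $\CH(A)$ is a convex combination of at most $d+1$ points of $A$, and the relative interior/closure structure of convex sets in $\bR^{d}$ lets one reduce to the case where $\CH(A)$ is the full-dimensional interior of its closure plus boundary; the cleanest route is to restrict to the affine hull of $A$, inside which $\overline{\CH(A)}\setminus\CH(A)$ lies in the relative boundary, and then use that a probability average of points cannot escape to the relative boundary unless $Z$ is a.s.\ supported on a proper face, which can be handled by induction on dimension.

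\textbf{Part 2: the converse representation.}
For the second assertion, given $z \in \CH(A)$ I would use the explicit description of the convex hull in \eqref{equ:def_convex_hull}: there exist $J \in \bN$, points $a_{1},\dots,a_{J} \in A$ and weights $w_{1},\dots,w_{J} \geq 0$ with $\sum_{j=1}^{J} w_{j} = 1$ such that $z = \sum_{j=1}^{J} w_{j} a_{j}$. I then construct the probability space explicitly: take $\tilde{\Omega} = \{1,\dots,J\}$ with $\tilde{\Sigma} = 2^{\tilde{\Omega}}$ and $\tilde{\bP}(\{j\}) = w_{j}$, and define $\tilde{Z}(j) = a_{j} \in A$. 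This $\tilde{Z}$ takes values in $A$ by construction, and it is weakly integrable with $\bE[\tilde{Z}] = \sum_{j=1}^{J} w_{j} a_{j} = z$: indeed, for any $\ell \in \cZ'$ one has $\ell(\tilde{Z}) \in L^{1}(\tilde{\bP})$ trivially (a finite space) and $\int_{\tilde{\Omega}} \ell(\tilde{Z})\, \rd\tilde{\bP} = \sum_{j} w_{j} \ell(a_{j}) = \ell\bigl(\sum_{j} w_{j} a_{j}\bigr) = \ell(z)$ by linearity of $\ell$, which is precisely the defining property of $\bE[\tilde{Z}] = z$.

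\textbf{Main obstacle.}
I expect the separation argument itself to be routine once set up, and Part 2 is essentially a definition-chase. The genuinely delicate point is the finite-dimensional sharpening in Part 1, namely that one may drop the closure and conclude $\bE[Z] \in \CH(A)$ rather than merely $\overline{\CH(A)}$. The subtlety is that $\CH(A)$ need not be closed in $\bR^{d}$, yet we must rule out the mean landing on the part of the boundary that is omitted. Handling this cleanly requires a careful dimension-reduction argument (restricting to affine hulls and relative interiors, and showing that if the mean lies in the relative boundary then $Z$ is almost surely confined to a supporting face, on which one recurses), and this is the step I would spend the most care on.
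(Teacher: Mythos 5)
Your proposal is correct, and your Part~2 is verbatim the paper's own argument (the finite probability space $\tilde{\Omega}=\{1,\dots,J\}$ with weights $w_j$ and $\tilde Z(j)=a_j$). The difference lies in Part~1: the paper does not prove the inclusion $\bE[Z]\in\overline{\CH(A)}$ (resp.\ $\bE[Z]\in\CH(A)$ for $\cZ=\bR^d$) at all, but outsources it to \citet[Theorem~3.1]{Perlman1974Jensen} and \citet[Theorem~10.2.6]{Dudley2002real}, whereas you supply the argument yourself. Your Hahn--Banach separation argument for the closed case is exactly the standard proof behind the first citation and is complete and correct; note only that weak integrability gives you measurability of $\ell(Z)$, not of $Z$, which is all you use. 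Your finite-dimensional sharpening is given only as a sketch, but it is the right sketch and matches the cited Dudley proof in spirit: since $\mathrm{ri}(\CH(A))=\mathrm{ri}(\overline{\CH(A)})$ for convex sets in $\bR^d$, the omitted points $\overline{\CH(A)}\setminus\CH(A)$ lie in the relative boundary; if $\bE[Z]$ landed there, a supporting functional $\ell$ on the affine hull would satisfy $\ell(Z)\le\ell(\bE[Z])$ pointwise with equality in expectation, forcing $\ell(Z)=\ell(\bE[Z])$ almost surely (the event is measurable because $\ell(Z)\in L^1(\bP)$), so $Z$ is a.s.\ confined to a proper face and one recurses on dimension, terminating at dimension zero. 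To turn the sketch into a proof you would need to spell out the restriction of the probability space to the full-measure face-event and check that weak integrability and the mean are preserved, but there is no gap in the idea. In short: same approach as the literature the paper cites, more self-contained than the paper itself, with one step left as an honest sketch.
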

\begin{proof}
See \citep[Theorem~3.1]{Perlman1974Jensen} for the general statement $\bE[Z] \in \overline{\CH(A)}$ and \citep[Theorem~10.2.6.]{Dudley2002real} for the proof of $\bE[Z] \in \CH(A)$ in the case $\cZ = \bR^{d}$ (though our setup is slightly more general, the proof goes similarly).
Now let $z \in \CH(A)$. Then it can be written as a convex combination of elements of $A$, i.e.\ 
$
z = \sum_{j=1}^{J} w_{j} a_{j},
$
for some $J\in\bN$, $a_{j} \in A$, $w_{j} \geq 0$, $j=1,\dots,J$, with $\sum_{j=1}^{J} w_{j} = 1$.
Choose $\tilde{\Omega} = \{ 1,\dots,J \}$ with distribution $\tilde{\bP}$ given by the probability vector $(w_{1},\dots,w_{J})$ and let $\tilde{Z}(j) = a_{j}$.
Then $\bE[\tilde{Z}] = \sum_{j=1}^{J} w_{j} a_{j} = z$.
\end{proof}

An application of above theorem to the convex hull of the graph of a function $f$ yields:

\begin{corollary}
\label{cor:mean_in_convex_hull_of_graph}
Under \Cref{assump:general}, $\bE[(X,f(X))] \in \CH(\cG(f))$ if $\cX = \bR^{m},\, \cY = \bR^{n},\ m,n\in\bN$, and $\bE[(X,f(X))]
\in
\overline{\CH(\cG(f))}$ in general.
Further, for any $(x,y) \in \CH(\cG(f))$ there exist a probability space $(\tilde{\Omega},\tilde{\Sigma},\tilde{\bP})$ and $\tilde{X}\colon \tilde{\Omega} \to K$ such that the pair $(\tilde{X},f(\tilde{X}))$ is weakly integrable and $\bE[(\tilde{X},f(\tilde{X}))] = (x,y)$.
\end{corollary}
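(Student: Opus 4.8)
The plan is to deduce the statement directly from \Cref{thm:mean_in_convex_hull} applied to the product space. First I would set $\cZ \coloneqq \cX \times \cY$, which is again a real, Hausdorff, locally convex topological vector space as a product of two such spaces, so that \Cref{assump:general} remains in force with this choice of $\cZ$. Taking $A \coloneqq \cG(f) \subseteq \cZ$ and $Z \coloneqq (X, f(X)) \colon \Omega \to A$, the hypothesis that the pair $(X, f(X))$ is weakly integrable is precisely the statement that $Z$ is a weakly integrable $\cZ$-valued map taking values in $A$. \Cref{thm:mean_in_convex_hull} then yields $\bE[Z] = \bE[(X, f(X))] \in \overline{\CH(\cG(f))}$ at once; and in the finite-dimensional case $\cX = \bR^{m}$, $\cY = \bR^{n}$ we have $\cZ = \bR^{m+n}$, so the same theorem gives the sharper conclusion $\bE[(X, f(X))] \in \CH(\cG(f))$.

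For the converse direction, I would again invoke the second part of \Cref{thm:mean_in_convex_hull} with $A = \cG(f)$: given $(x, y) \in \CH(\cG(f))$, it produces a probability space $(\tilde{\Omega}, \tilde{\Sigma}, \tilde{\bP})$ and a weakly integrable map $\tilde{Z} \colon \tilde{\Omega} \to \cG(f)$ with $\bE[\tilde{Z}] = (x, y)$. The only remaining task is to rewrite $\tilde{Z}$ in the required form $(\tilde{X}, f(\tilde{X}))$. Since $\tilde{Z}$ takes values in the graph $\cG(f)$, every value $\tilde{Z}(\tilde{\omega})$ equals $(a, f(a))$ for a unique $a = a(\tilde{\omega}) \in K$, namely its first coordinate. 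Hence, defining $\tilde{X} \coloneqq \pi_{\cX} \circ \tilde{Z}$, where $\pi_{\cX} \colon \cX \times \cY \to \cX$ denotes the canonical projection, we automatically obtain $\tilde{Z} = (\tilde{X}, f(\tilde{X}))$, so that $(\tilde{X}, f(\tilde{X}))$ is weakly integrable and $\bE[(\tilde{X}, f(\tilde{X}))] = (x, y)$, as desired.

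I expect no genuine obstacle here, since the corollary is essentially a restatement of \Cref{thm:mean_in_convex_hull} for the product space $\cX \times \cY$ applied to the graph. The one point requiring a word of care is the final identification $\tilde{Z} = (\tilde{X}, f(\tilde{X}))$, which rests on the elementary observation that a point of the graph is completely determined by its first coordinate. Since the construction in the proof of \Cref{thm:mean_in_convex_hull} is itself fully explicit (a finite convex combination realized on $\tilde{\Omega} = \{1, \dots, J\}$), one may alternatively read off $\tilde{X}(j)$ directly as the $K$-component of the $j$-th atom $a_{j} \in \cG(f)$, bypassing the projection argument entirely.
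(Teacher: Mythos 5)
Your proposal is correct and follows exactly the paper's own argument: apply \Cref{thm:mean_in_convex_hull} with $\cZ \coloneqq \cX \times \cY$, $A \coloneqq \cG(f)$ and $Z \coloneqq (X,f(X))$, and for the converse note that any map into the graph is of the form $(\tilde{X}, f(\tilde{X}))$ with $\tilde{X}$ the first-coordinate projection. The paper merely states this last observation parenthetically, whereas you spell it out; there is no substantive difference.
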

\begin{proof}
This follows from \Cref{thm:mean_in_convex_hull} for $Z \coloneqq (X,f(X))$ and $A \coloneqq \cG(f) \subseteq \cX \times \cY \eqqcolon \cZ$ (note that any map $\tilde{Z}$ taking values in $\cG(f)$ is of the form $(\tilde{X},f(\tilde{X}))$).
\end{proof}

The following result is basically a restatement of \Cref{cor:mean_in_convex_hull_of_graph}.
Since it is the main result of this section, we state it as a theorem:

\begin{theorem}[{graph convex hull bounds on $\bE[f(X)]$}]
\label{thm:main_result}
Under \Cref{assump:general}, let the convex hull of the graph of $f$ be ``enclosed'' by two ``envelope'' functions $g_{l},g_{u} \colon \overline{\CH(K)} \to \overline{\cY}$ in the following way:
\begin{align}
\label{equ:envelope_characterization_convex_hull}
\begin{split}
\overline{\CH(\cG(f))}
&\subseteq
\set{(x,y) \in \overline{\CH(K)} \times \cY}{g_{l}(x)\leq y \leq g_{u}(x)},
\\[1ex]
\text{i.e.}
\qquad
\overline{\CH(\cG(f))}|_{x}
&\subseteq
[g_{l}(x),g_{u}(x)]
\qquad
\text{for each }
x\in \overline{\CH(K)}.
\end{split}
\end{align}
Then
\begin{equation}
\label{equ:main_result}
g_{l}(\bE[X])
\leq
\bE[f(X)]
\leq
g_{u}(\bE[X]).
\end{equation}
If $\cX = \bR^{m},\, \cY = \bR^{n},\ m,n\in\bN$, then $\overline{\CH(K)}$ and $\overline{\CH(\cG(f))}$ can be replaced by $\CH(K)$ and $\CH(\cG(f))$, respectively.
Further, for $\cY = \bR$, \eqref{equ:main_result} is sharp in the following sense:
\begin{itemize}
\item 
If $\cX = \bR^{m}$ and $g_{l}(x) = \inf (\CH(\cG(f))|_{x})$ and $g_{u}(x) = \sup (\CH(\cG(f))|_{x})$ for each $x\in \CH(K)$, then, for every $x \in \CH(K)$ and $y \in (g_{l}(x),g_{u}(x))$, there exist a probability space $(\tilde{\Omega},\tilde{\Sigma},\tilde{\bP})$ and $\tilde{X}\colon \tilde{\Omega} \to K$ such that the pair $(\tilde{X},f(\tilde{X}))$ is weakly integrable and $(\bE[\tilde{X}] , \bE[f(\tilde{X})]) = (x,y)$.
\item 
In general, if $g_{l}(x) = \inf (\overline{\CH(\cG(f))}|_{x})$ and $g_{u}(x) = \sup (\overline{\CH(\cG(f))}|_{x})$ for each $x\in \overline{\CH(K)}$, then, for every $x \in \overline{\CH(K)}$ and $y \in (g_{l}(x),g_{u}(x))$ and for any neighborhood $U$ of $(x,y)$, there exist a probability space $(\tilde{\Omega},\tilde{\Sigma},\tilde{\bP})$ and $\tilde{X}\colon \tilde{\Omega} \to K$ such that the pair $(\tilde{X},f(\tilde{X}))$ is weakly integrable and $(\bE[\tilde{X}], \bE[f(\tilde{X})]) \in U$.
\end{itemize}

\end{theorem}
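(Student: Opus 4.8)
The plan is to read everything off from Corollary~\ref{cor:mean_in_convex_hull_of_graph}, since the theorem is essentially a rephrasing of it in ``envelope'' language. First I would record the elementary compatibility of the weak integral with the coordinate projections: writing $\pi_{\cX}, \pi_{\cY}$ for the (continuous, linear) projections of $\cX \times \cY$ and testing the defining identity of the weak integral against functionals of the form $\ell \circ \pi_{\cX}$ and $\ell \circ \pi_{\cY}$, one gets $\bE[(X,f(X))] = (\bE[X], \bE[f(X)])$. Corollary~\ref{cor:mean_in_convex_hull_of_graph} then places this point in $\overline{\CH(\cG(f))}$ (or in $\CH(\cG(f))$ when $\cX,\cY$ are finite-dimensional).

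Next I would verify that the envelope functions are evaluated at a legitimate argument, i.e.\ that $\bE[X] \in \overline{\CH(K)}$. Because $\pi_{\cX}$ is linear it maps convex combinations to convex combinations, so $\pi_{\cX}(\CH(\cG(f))) = \CH(\pi_{\cX}(\cG(f))) = \CH(K)$, and by continuity $\pi_{\cX}(\overline{\CH(\cG(f))}) \subseteq \overline{\CH(K)}$; applying $\pi_{\cX}$ to the point above yields $\bE[X] \in \overline{\CH(K)}$ (resp.\ $\CH(K)$). The inequality~\eqref{equ:main_result} is then immediate: the enclosure~\eqref{equ:envelope_characterization_convex_hull} says precisely that $\bE[f(X)] \in \overline{\CH(\cG(f))}|_{\bE[X]} \subseteq [g_{l}(\bE[X]), g_{u}(\bE[X])]$. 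The finite-dimensional refinement follows verbatim with every closure deleted, invoking the $\cX=\bR^{m},\cY=\bR^{n}$ case of the Corollary instead.

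For the sharpness bullets ($\cY = \bR$) the guiding observation is that slices of convex sets are intervals. In the finite-dimensional bullet I would fix $x \in \CH(K)$ and note that, since $\CH(\cG(f))$ is convex, its slice $\CH(\cG(f))|_{x}$ is a convex subset of $\bR$, hence an interval whose infimum and supremum are exactly $g_{l}(x)$ and $g_{u}(x)$; consequently the open interval satisfies $(g_{l}(x), g_{u}(x)) \subseteq \CH(\cG(f))|_{x}$. Thus any $y$ in this open interval gives $(x,y) \in \CH(\cG(f))$, and the existence (``Further, \dots'') clause of Corollary~\ref{cor:mean_in_convex_hull_of_graph} produces a probability space and a map $\tilde{X}$ with $(\bE[\tilde{X}], \bE[f(\tilde{X})]) = (x,y)$.

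In the general bullet the same slicing argument applies to the closed convex set $\overline{\CH(\cG(f))}$, whose slice $\overline{\CH(\cG(f))}|_{x}$ is the continuous preimage of a closed set, hence a closed interval; this gives $(x,y) \in \overline{\CH(\cG(f))}$ for every $y \in (g_{l}(x), g_{u}(x))$. The hard part will be exactly here: the existence clause of the Corollary realizes a prescribed mean only for points of $\CH(\cG(f))$, not of its closure, so exact realization cannot be expected in infinite dimensions (cf.\ the footnote to Theorem~\ref{thm:mean_in_convex_hull}). The remedy, which is precisely the weakened ``up to a neighborhood'' form the theorem asserts, is to approximate: since $(x,y) \in \overline{\CH(\cG(f))}$, every neighborhood $U$ of $(x,y)$ meets $\CH(\cG(f))$ in some $(x',y')$, to which the existence clause applies, yielding $\tilde{X}$ with $(\bE[\tilde{X}], \bE[f(\tilde{X})]) = (x',y') \in U$. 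This interplay between closure and exact attainability is the one genuine subtlety of the argument; the rest is bookkeeping with projections and convexity.
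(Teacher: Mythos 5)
Your proposal is correct and follows essentially the same route as the paper: the inequality is read off from \Cref{thm:mean_in_convex_hull,cor:mean_in_convex_hull_of_graph} via $(\bE[X],\bE[f(X)])\in\overline{\CH(\cG(f))}$, and sharpness comes from the existence clause of \Cref{cor:mean_in_convex_hull_of_graph}. The only difference is that you spell out the details the paper leaves implicit (the slice of a convex set is an interval containing $(g_{l}(x),g_{u}(x))$, and in the general case one approximates a point of the closure by a point of $\CH(\cG(f))$ inside $U$), which is exactly the right way to complete the paper's one-line sharpness claim.
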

\begin{proof}
Since $\bE[X] \in \overline{\CH(K)}$ and $(\bE[X],\bE[f(X)]) \in \overline{\CH(\cG(f))}$ by \Cref{thm:mean_in_convex_hull,cor:mean_in_convex_hull_of_graph},
\[
\bE[f(X)] \in \overline{\CH(\cG(f))}|_{\bE[X]} \subseteq [g_{l}(\bE[X]),g_{u}(\bE[X])],
\]
cf.\ \Cref{fig:Jensen_generalized}.
In the finite-dimensional case $\cX = \bR^{m},\, \cY = \bR^{n}$, we can omit all the closures by \Cref{thm:mean_in_convex_hull,cor:mean_in_convex_hull_of_graph}.
The claim on the sharpness of the bounds also follows directly from \Cref{cor:mean_in_convex_hull_of_graph}.
\end{proof}

\begin{remark}
\label{rem:Comments_on_main_result}
We emphasize again that, just like Jensen's inequality, the bounds $g_{l}$ and $g_{u}$ are determined solely by the function $f$ and hold for \emph{any} $K$-valued map $X$ that meets the minimal integrability assumptions.
\end{remark}

A direct application of the graph convex hull bounds from \Cref{thm:main_result} yields Jensen's inequality:

\begin{corollary}[Jensen's inequality]
	\label{cor:Jensen_follows_from_our_bounds}
	Let \Cref{assump:general} hold and $f\colon K \to \bR$ be a convex function on a convex domain $K\subseteq \bR^{d}$.
	Then $\bE[f(X)] \geq f(\bE[X])$.
\end{corollary}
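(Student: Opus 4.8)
The plan is to read off Jensen's inequality as the lower bound in \Cref{thm:main_result}, with the enclosing envelope functions supplied by \Cref{lemma:for_Jensen}.

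First I would note that we are in the finite-dimensional regime $\cX = \bR^{d}$, $\cY = \bR$, so by the last sentence of \Cref{thm:main_result} every closure may be dropped and one works throughout with $\CH$ rather than $\overline{\CH}$. Since $K$ is convex, $\CH(K) = K$; as $X$ takes values in $K$, \Cref{thm:mean_in_convex_hull} (finite-dimensional case) gives $\bE[X] \in \CH(K) = K$, so that $f(\bE[X])$ is well defined. This is a point worth checking explicitly, since in the general graph convex hull setup $\bE[X] \in K$ may fail (cf.\ \Cref{ex:convex_hull_2}).

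Next I would invoke \Cref{lemma:for_Jensen}, which yields $\CH(\cG(f))|_{x} \subseteq [f(x),\infty)$ for every $x \in K$. This is exactly hypothesis \eqref{equ:envelope_characterization_convex_hull} of \Cref{thm:main_result} for the choice of envelope functions $g_{l} \coloneqq f$ and $g_{u} \equiv +\infty$ on $\CH(K) = K$, the upper envelope taking its value in $\overline{\cY} = \overline{\bR}$, which is permitted. Applying \Cref{thm:main_result} then gives
\[
f(\bE[X]) = g_{l}(\bE[X]) \leq \bE[f(X)] \leq g_{u}(\bE[X]) = +\infty,
\]
and discarding the vacuous upper bound leaves precisely $\bE[f(X)] \geq f(\bE[X])$.

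There is no real obstacle here: the corollary is a direct specialization of the graph convex hull bounds, which is exactly why it furnishes the promised short proof of Jensen's inequality. The only points demanding a moment's care are (i) confirming $\bE[X] \in K$ so that the right-hand side is defined, and (ii) observing that $g_{u} \equiv +\infty$ is a legitimate extended-real-valued upper envelope, so that the convexity of $f$ enters solely through the lower envelope $g_{l} = f$.
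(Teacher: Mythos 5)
Your proof is correct and follows essentially the same route as the paper, which simply cites \Cref{thm:main_result} together with \Cref{lemma:for_Jensen}; you have filled in exactly the intended details (taking $g_{l}=f$, $g_{u}\equiv+\infty$, and using the finite-dimensional case of \Cref{thm:main_result} so that only the non-closure inclusion $\CH(\cG(f))|_{x}\subseteq[f(x),\infty)$ from \Cref{lemma:for_Jensen} is needed). Your explicit check that $\bE[X]\in K$ via \Cref{thm:mean_in_convex_hull} is a worthwhile addition that the paper leaves implicit.
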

\begin{proof}
	This follows directly from \Cref{lemma:for_Jensen} and \Cref{thm:main_result} with $g_l \coloneqq f$.
\end{proof}

\begin{remark}
	\label{rem:our_derivation_of_Jensen_is_simpler}
	It's worth noting that this novel proof strategy of Jensen's inequality is significantly simpler than traditional approaches.
	In the proof of, for example, \citep[Theorem~10.2.6.]{Dudley2002real}, the author outlines:
	\begin{itemize}
		\item
		a first argument showing $\bE[X]\in K$, akin to \Cref{thm:mean_in_convex_hull}, ensuring $f(\bE[X])$ is well defined,
		\item
		a second straightforward argument similar to \Cref{lemma:for_Jensen} and
		\item 
		a third, more complex argument to establish the inequality itself.
	\end{itemize}
	Yet, this latter argument can be entirely bypassed by applying the first argument a second time, this time to the pair $(X,f(X))$, resulting in $(\bE[X],\bE[f(X)]) \in \CH(\cG(f))$ and thus directly proving Jensen's inequality.
	This elegant approach appears to have been previously overlooked, as well as its application to non-convex functions and domains as demonstrated in \Cref{thm:main_result}.
\end{remark}

Note that the lower bound $f(\bE[X])$ on $\bE[f(X)]$ in Jensen's inequality \eqref{equ:standard_Jensen} is superfluous in our setup and may not even be well defined (since $\bE[X]$ is not guaranteed to lie in $K$, cf.\ \Cref{ex:convex_hull_2}).
However, if comparing $\bE[f(X)]$ and $f(\bE[X])$ is crucial, the stated result leads to the following consequences whenever $f(\bE[X])$ exists:
\newpage
\begin{corollary}
\label{cor:further_Jensen_type_inequalities}
Let the assumptions of \Cref{thm:main_result} hold and let $\cY = \bR$.
Then
\vspace{1ex}
\begin{enumerate}[label = (\alph*)]
	\itemsep=2ex	
	\item
	\label{item:Jensen_type_inequality_2}
	$c_{l}(\bE[X]) \, f(\bE[X])
	\leq
	\bE[f(X)]
	\leq
	c_{u}(\bE[X]) \, f(\bE[X])$,	
	\item
	\label{item:Jensen_type_inequality_3}
	$
	\max\big(\inf_{x\in K} f(x) \, ,\, f(\bE[X]) - s_{u}\big)
	\leq
	\bE[f(X)]
	\leq
	\min\big(\sup_{x\in K} f(x) \, ,\, f(\bE[X]) + s_{l}\big),$
	\item
	\label{item:Jensen_type_inequality_4}
	$\hat{c}_{l} \, f(\bE[X])
	\leq
	\bE[f(X)]
	\leq
	\hat{c}_{u} \, f(\bE[X])$,
\end{enumerate}
\vspace{1ex}
whenever the quantities
\begin{align*}
	c_{l}(x) &\coloneqq \tfrac{g_{l}(x)}{f(x)},
	&
	s_{l} &\coloneqq\norm{f-g_{l}}_{\infty},
	&
	\hat{c}_{l} &\coloneqq \inf_{x\in \overline{\CH(K)}}\tfrac{g_{l}(x)}{f(x)},
	\\
	c_{u}(x) &\coloneqq \tfrac{g_{u}(x)}{f(x)},
	&
	s_{u} &\coloneqq \norm{f-g_{u}}_{\infty},
	&
	\hat{c}_{u} &\coloneqq \sup_{x\in \overline{\CH(K)}}\tfrac{g_{u}(x)}{f(x)},
\end{align*}
as well as $f(\bE[X])$ are well defined.
If $\cX = \bR^{m},\, \cY = \bR^{n},\ m,n\in\bN$, $\overline{\CH(K)}$ can be replaced by $\CH(K)$ in the definition of $\hat{c}_{l}$ and $\hat{c}_{u}$.
\end{corollary}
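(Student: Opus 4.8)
The plan is to read off all three inequalities directly from the main result of this section, $g_{l}(\bE[X]) \leq \bE[f(X)] \leq g_{u}(\bE[X])$ in \eqref{equ:main_result}; no new analytic input is needed, only elementary rearrangements. The one structural fact I would record at the outset is that, since $\cG(f) \subseteq \overline{\CH(\cG(f))}$, evaluating \eqref{equ:envelope_characterization_convex_hull} at the point $(x,f(x))$ for $x\in K$ gives the sandwich $g_{l}(x) \leq f(x) \leq g_{u}(x)$ on $K$. This both fixes the signs of the one-sided gaps, $f - g_{l} \geq 0$ and $g_{u} - f \geq 0$, and lets me translate freely between the envelopes and $f$ itself.

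For part \ref{item:Jensen_type_inequality_2}, I would simply note the pointwise identities $c_{l}(\bE[X])\,f(\bE[X]) = g_{l}(\bE[X])$ and $c_{u}(\bE[X])\,f(\bE[X]) = g_{u}(\bE[X])$, valid as soon as $f(\bE[X])$ is defined and nonzero; substituting them into \eqref{equ:main_result} yields \ref{item:Jensen_type_inequality_2} verbatim, with no constraint on the sign of $f(\bE[X])$ since nothing is divided through. Part \ref{item:Jensen_type_inequality_4} then follows by replacing the pointwise ratios with their global extrema: because $\hat{c}_{l} \leq c_{l}(\bE[X])$ and $c_{u}(\bE[X]) \leq \hat{c}_{u}$ by definition of the infimum and supremum, multiplying through by $f(\bE[X])$ and chaining with \ref{item:Jensen_type_inequality_2} gives $\hat{c}_{l}\,f(\bE[X]) \leq \bE[f(X)] \leq \hat{c}_{u}\,f(\bE[X])$ --- here I need $f(\bE[X]) > 0$ so that multiplication preserves the directions of the inequalities.

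For part \ref{item:Jensen_type_inequality_3} I would bound $\bE[f(X)]$ in two independent ways and then intersect the results. On one hand, $X$ takes values in $K$, so $\inf_{x\in K} f(x) \leq f(X) \leq \sup_{x\in K} f(x)$ holds pointwise on $\Omega$; applying $\bE$, which is monotone and fixes constants, gives $\inf_{K} f \leq \bE[f(X)] \leq \sup_{K} f$. On the other hand, evaluating the sandwich at $x = \bE[X]$ (for which I need $\bE[X]\in K$, guaranteed by the standing requirement that $f(\bE[X])$ be defined) and invoking the sup-norm gaps yields $g_{l}(\bE[X]) \geq f(\bE[X]) - \norm{f-g_{l}}_{\infty}$ and $g_{u}(\bE[X]) \leq f(\bE[X]) + \norm{f-g_{u}}_{\infty}$; feeding these into \eqref{equ:main_result} produces the deviation bounds centred at $f(\bE[X])$. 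Taking the larger of the two lower bounds and the smaller of the two upper bounds gives \ref{item:Jensen_type_inequality_3}.

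The only genuine obstacle here is bookkeeping rather than mathematics. One must pair each one-sided gap with the correct envelope --- the downward deviation of $\bE[f(X)]$ from $f(\bE[X])$ is controlled by the $g_{l}$-gap $\norm{f-g_{l}}_{\infty}$ and the upward deviation by the $g_{u}$-gap $\norm{f-g_{u}}_{\infty}$ --- and one must track the sign of $f(\bE[X])$ in the multiplicative estimates \ref{item:Jensen_type_inequality_2} and \ref{item:Jensen_type_inequality_4}, where an implicit positivity of $f$ at $\bE[X]$ is exactly what keeps the ratio inequalities pointing the right way. A useful sanity check throughout is the convex case $g_{l}=f$, $g_{u}=+\infty$, in which each bound should collapse to Jensen's inequality or to a trivial one. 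Everything else is a one-line consequence of \Cref{thm:main_result}.
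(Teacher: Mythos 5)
Your write-up follows exactly the route the paper takes: the paper's entire proof of this corollary is the one-liner that \ref{item:Jensen_type_inequality_2} and \ref{item:Jensen_type_inequality_3} follow directly from \cref{thm:main_result} and that \ref{item:Jensen_type_inequality_4} follows from \ref{item:Jensen_type_inequality_2}, and you have simply made the bookkeeping explicit. Your two side observations are both correct and worth having on record: the identity $c_{l}(\bE[X])\,f(\bE[X]) = g_{l}(\bE[X])$ means \ref{item:Jensen_type_inequality_2} needs no sign condition, whereas passing from $c_{l}(\bE[X]), c_{u}(\bE[X])$ to $\hat{c}_{l},\hat{c}_{u}$ in \ref{item:Jensen_type_inequality_4} genuinely requires $f(\bE[X]) \geq 0$ --- a hypothesis the paper buries in ``whenever well defined''.

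The one substantive issue is that your derivation of \ref{item:Jensen_type_inequality_3} produces $f(\bE[X]) - s_{l} \leq \bE[f(X)] \leq f(\bE[X]) + s_{u}$, which has the subscripts in the \emph{opposite} positions from the printed statement; as written, you have not proved the corollary in the form stated. You should be aware, however, that your pairing (downward deviation controlled by $\norm{f-g_{l}}_{\infty}$, upward deviation by $\norm{f-g_{u}}_{\infty}$) is the correct one and the printed statement is false: take $K=[0,1]$, $f(x) = 1 + x(1-x)$, so that the tight envelopes are $g_{l} \equiv 1$ and $g_{u} = f$, whence $s_{u} = 0$ and $s_{l} = \tfrac14$, and let $X$ be uniform on $\{0,1\}$; then the printed lower bound $\max\big(\inf_{K} f,\, f(\bE[X]) - s_{u}\big) = \tfrac54$ exceeds $\bE[f(X)] = 1$, while your version gives the sharp bound $1 \leq \bE[f(X)]$. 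So the roles of $s_{l}$ and $s_{u}$ in \ref{item:Jensen_type_inequality_3} are transposed in the paper (and correspondingly in \cref{cor:further_Markov_operators_Jensen_type_inequalities,cor:further_conditional_Jensen_type_inequalities}); modulo that misprint, your proof is complete and coincides with the paper's intended argument.
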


\begin{proof}
\ref{item:Jensen_type_inequality_2} and \ref{item:Jensen_type_inequality_3} follow directly from \Cref{thm:main_result}, while \ref{item:Jensen_type_inequality_4} follows from \ref{item:Jensen_type_inequality_2}.	
\end{proof}

\begin{remark}
\label{rem:Comments_on_further_Jensen_type_inequalities}
While the bounds in \Cref{thm:main_result} and \Cref{cor:further_Jensen_type_inequalities}~\ref{item:Jensen_type_inequality_2} require the knowledge of $\bE[X]$ (cf.\ \Cref{problem:E_fX_bounds}), statements~\ref{item:Jensen_type_inequality_3} and~\ref{item:Jensen_type_inequality_4} compare $\bE[f(X)]$ with $f(\bE[X])$ without relying on this information.
In particular, \ref{item:Jensen_type_inequality_3} and~\ref{item:Jensen_type_inequality_4} offer a priori bounds applicable uniformly across all random variables $X$ with values in $K$.
Consequently, in transitioning from~\ref{item:Jensen_type_inequality_2} to~\ref{item:Jensen_type_inequality_4}, the lower bound $c_{l}(\bE[X])$ must be substituted with its most conservative estimate, namely, the infimum of $c_{l}(x)$ across all potential values of $x$ (and the supremum in the case of the upper bound).
However, these bounds are still sharper than the obvious inequalities (whenever well defined)
\[
\inf_{x\in K} f(x)
\leq
\bE[f(X)]
\leq
\sup_{x\in K} f(x),
\qquad
f(\bE[X]) \inf_{x,y\in \overline{K}} \tfrac{f(x)}{f(y)}
\leq
\bE[f(X)]
\leq
f(\bE[X]) \sup_{x,y\in \overline{K}} \tfrac{f(x)}{f(y)}.
\]
\end{remark}

\begin{figure}[t]
	\centering
	\begin{subfigure}[b]{0.48\textwidth}
		\centering
		\includegraphics[width=\textwidth]{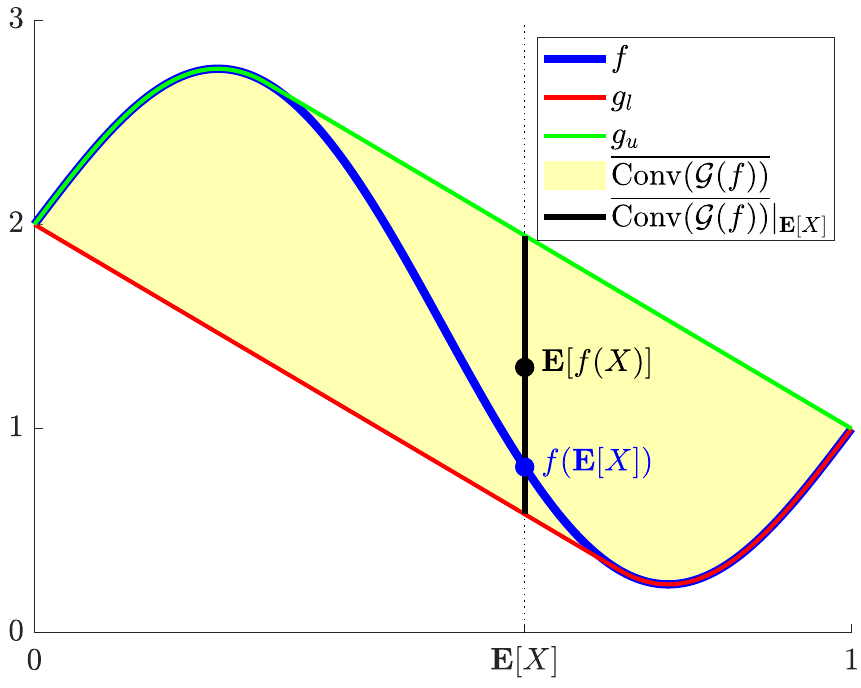}
	\end{subfigure}
	\hspace{1em}
	\begin{subfigure}[b]{0.48\textwidth}
		\centering
		\includegraphics[width=\textwidth]{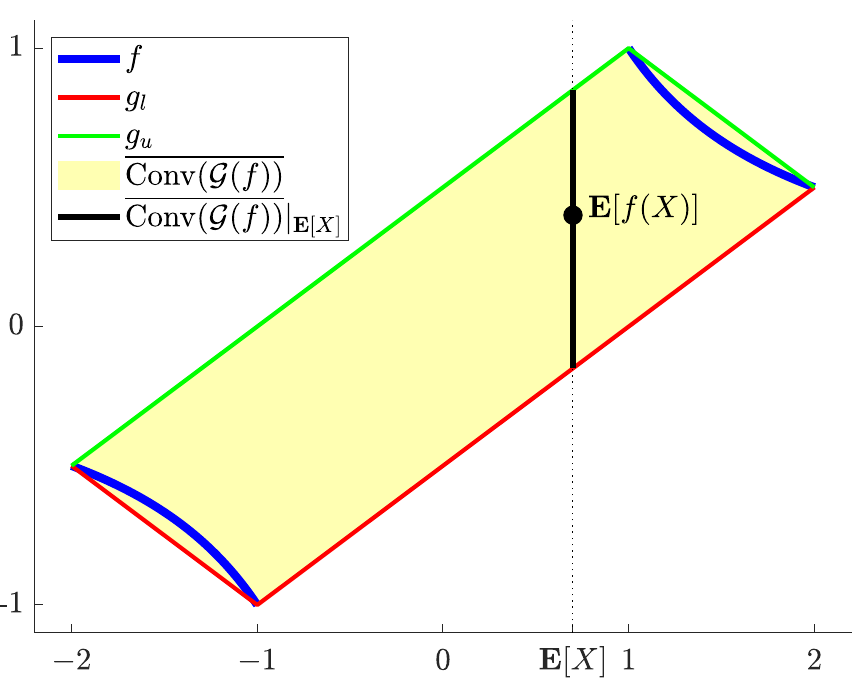}
	\end{subfigure}
	\caption{
		Illustration of \Cref{ex:convex_hull_1,ex:convex_hull_2}.
		Since the mean $\bE[(X,f(X))] = (\bE[X],\bE[f(X)])$ (black dot) of the pair $(X,f(X))$ lies in the closure $\overline{\CH(\cG(f))}$ of the convex hull of the graph of $f$, the value $\bE[f(X)]$ is restricted to the thick black line $\overline{\CH}_{\bE[X]}$, providing the bounds in \Cref{thm:main_result}.
		Note that, in the second example, the domain $K = [-2,-1] \cup [1,2]$ of $f$ is disconnected and $f(\bE[X])$ is not even defined whenever $\bE[X] \in (-1,1)$, which does not affect \Cref{thm:main_result}.
	}
	\label{fig:Jensen_generalized}
\end{figure}

\begin{example}
	\label{ex:convex_hull_1}
	Let $K = [0,1]$ and $f\colon K \to \bR$, $f(x) = 2 - x + \sin(2 \pi x)$, visualized in \Cref{fig:Jensen_generalized} (left), which is neither convex nor concave.
	Then the ``envelope'' functions of $\CH(\cG(f))$ are given by
	\[
	g_{l}(x)
	=
	\begin{cases}
		2-ax & \text{if } x \leq x_{\ast},\\
		f(x) & \text{otherwise},
	\end{cases}
	\qquad
	g_{u}(x)
	=
	\begin{cases}
		f(x)   & \text{if } x \leq 1 - x_{\ast},\\
		1+a-ax & \text{otherwise},
	\end{cases}
	\]
	where $x_{\ast} \approx 0.715$ is the solution of $2\pi\cos(2\pi x) \, x = \sin(2\pi x)$ and $a = 1 - 2 \pi \cos(2 \pi x_{\ast})$.	
	Note that the constants $\hat{c}_{l} \approx 0.5$ and $\hat{c}_{u}\approx 6.5$ from \Cref{cor:further_Jensen_type_inequalities} compare favorably with the obvious bounds $\inf_{x,y\in K}\tfrac{f(x)}{f(y)} \approx 0.09$ and $\sup_{x,y\in K}\tfrac{f(x)}{f(y)} \approx 11.6.$ from \Cref{rem:Comments_on_main_result} (it is easy to see how the function $f$ can be modified to make the improvements over these bounds as large as desired).	
\end{example}

\begin{example}
	\label{ex:convex_hull_2}
	Let $K = [-2,-1] \cup [1,2]$ be the \emph{disconnected} domain of the function $f\colon K \to \bR$, $f(x) = x^{-1}$, visualized in \Cref{fig:Jensen_generalized} (right), which again is neither convex nor concave.
	Again, the ``envelope'' functions of $\CH(\cG(f))$ from \Cref{thm:main_result}, given by the piecewise linear functions
	\[
	g_{l}(x)
	=
	\begin{cases}
		(-3-x)/2 & \text{if } x \in [-2,-1],\\
		(x-1)/2  & \text{if } x \in [-1,2],
	\end{cases}
	\qquad
	g_{u}(x)
	=
	\begin{cases}
		(x+1)/2 & \text{if } x \in [-2,1],\\
		(3-x)/2 & \text{if } x \in [1,2],
	\end{cases}
	\]
	provide sharp bounds on $\bE[f(X)]$.
	In this case, it might be impossible to compare $\bE[f(X)]$ with $f(\bE[X])$, since the latter is not defined whenever $\bE[X] \in (-1,1)$.
\end{example}

\begin{remark}
While in the above examples the envelope functions $g_{l}$ and $g_{u}$ were determined analytically (up to the approximation of $x_{\ast}$), they usually can be computed numerically with high precision and little computational effort even for a complicated function $f$, as long as its domain $K$ is bounded.
\end{remark}

\section{Graph Convex Hull Bounds for Markov Operators}
\label{section:Markov_Operators}

Note that the only properties of the expected value $\bE$ used to prove $\bE[Z] \in \overline{\CH(A)}$ in \Cref{thm:mean_in_convex_hull} are its linearity, the positivity of $\ell(Z)$ for each $\ell \in \cZ'$ (meaning that $\ell(Z)\geq 0$ implies $\bE[\ell(Z)]\geq 0$), and the property that $\bE[Z] = Z$ for each constant random variable $Z \in \cZ$ (and the same is true for Jensen's inequality).
Therefore, Jensen's inequality and the graph convex hull bounds can be interpreted as algebraic results rather than as measure theoretic ones, and can be extended to operators that do not necessarily involve integration, but merely satisfy the three properties above.
This generalization is the purpose of this section and will also enable us to cover conditional expectations in the subsequent section.
Such operators are known as \emph{Markov operators} and play a crucial role in the analysis of time evolution phenomena and dynamical systems \citep{Bakry2014Markov}.
They are well-known to satisfy Jensen's inequality \citep[Equation~(1.2.1)]{Bakry2014Markov}, yet we significantly broaden the setting in which it applies, cf.\ \Cref{rem:def_of_Markov_operators_typical_assumptions}.

\begin{definition}
\label{def:Markov_operators}
Let $\Omega_{1},\Omega_{2}$ be sets and 
$\cS_{1},\cS_{2}$ be vector spaces of functions on $\Omega_{1}$ and $\Omega_{2}$ that contain the unit constant functions $\mathds{1}_{\Omega_{1}}\colon \Omega_{1} \to \bR$, $t \mapsto 1$ and $\mathds{1}_{\Omega_{2}}$, respectively,
e.g.\ $\cS_{1} = L^{1}(\bP_{1})$, if $(\Omega_{1},\Sigma_{1},\bP_{1})$ is a probability space.
A linear operator $\cE \colon \cS_{1} \to \cS_{2}$ is called a \defterm{Markov operator} if
\begin{enumerate}[label = (\roman*)]
	\item
	$\cE(\mathds{1}_{\Omega_{1}}) = \mathds{1}_{\Omega_{2}}$,
	\item 
	$\cE(\varphi) \geq 0$ for each $\varphi\in \cS_{1}$ with $\varphi \geq 0$.
\end{enumerate}
Further, let $\cZ$ be a real topological vector space and
\[
\cS^{\cZ}_{1}
\subseteq
\set{Z_{1} \colon \Omega_{1} \to \cZ}{\ell \in \cZ'\colon \ell(Z_{1}) \in \cS_{1}},
\quad
\cS^{\cZ}_{2}
\subseteq
\set{Z_{2} \colon \Omega_{2} \to \cZ}{\ell \in \cZ'\colon \ell(Z_{2}) \in \cS_{2}},
\]
e.g.\ $\cS^{\cZ}_{1} = L^{1}(\bP_{1};\cZ)$ in the case $\cS_{1} = L^{1}(\bP_{1})$.
We call $\cE^{\cZ} \colon \cS^{\cZ}_{1} \to \cS^{\cZ}_{2}$ a \defterm{$\cZ$-valued Markov operator} associated with the Markov operator $\cE$ if $\ell(\cE^{\cZ}(Z_{1})) = \cE(\ell(Z_{1}))$ for each $\ell \in \cZ'$ and $Z_{1} \in \cS^{\cZ}_{1}$, i.e.\  if the following diagram commutes for each $\ell \in \cZ'$.
\begin{center}
\begin{tikzcd}[column sep=4em,row sep=2em]
	\cS^{\cZ}_{1}
	\arrow[r,rightarrow,"\cE^{\cZ}"]
	\arrow[d,rightarrow,"\ell \circ"]
	&
	\cS^{\cZ}_{2}
	\arrow[d,rightarrow,"\ell \circ"]
	\\
	\cS_{1}
	\arrow[r,rightarrow,"\cE"]
	&
	\cS_{2}
\end{tikzcd}
\end{center}
\end{definition}

\begin{remark}
\label{rem:def_of_Markov_operators_typical_assumptions}
Typically, $\Omega_{1},\Omega_{2}$ are assumed to be measurable spaces or measure spaces, and the elements in $\cS_{1},\cS_{2}$ to be measurable or to lie in $L^{1}$ with respect to certain measures \citep{Bakry2014Markov,eisner2015operator}.
Further, some authors require Markov operators to be endomorphisms, i.e.\ $\cS_{1} = \cS_{2}$ \citep{Bakry2014Markov}, to be $L^{1}$-contractions \citep{Lin1979Markov,Rudnicki2000Markov} or to preserve $L^{1}$-norm \citep{eisner2015operator}.
By working under the minimal requirements in \Cref{def:Markov_operators} and by introducing the entirely new notion of a $\cZ$-valued Markov operator, we thus significantly expand the scope of the currently known version of Jensen's inequality for Markov operators \citep[Equation~(1.2.1)]{Bakry2014Markov}.
\end{remark}

\begin{example}[{cf.\ \Cref{section:Results}}]
	\label{ex:mean_as_Markov_operator}
	Let $(\Omega_{1},\Sigma_{1},\bP_{1})=(\Omega_{2},\Sigma_{2},\bP_{2})$ be a probability space and $\cS_{1} = L^{1}(\bP_{1})$ and $\cS_{2} = \spann (\mathds{1}_{\Omega_{2}})$.
	Further, let $\cZ$ be a real, Hausdorff, locally convex topological vector space and $\cS^{\cZ}_{1} = \set{Z_{1} \colon \Omega_{1} \to \cZ}{Z_{1} \text{ weakly integrable}}$,
	$\cS^{\cZ}_{2} = \set{Z_{2} \colon \Omega_{2} \to \cZ}{Z_{2} \text{ constant}}$.
	Then the expected value $\bE\colon \cS^{\cZ}_{1} \to \cS^{\cZ}_{2}$ is a $\cZ$-valued Markov operator associated with the Markov operator $\bE\colon \cS_{1} \to \cS_{2}$, where $\cS^{\cZ}_{2}$ is identified with $\cZ$ and $\cS_{2}$ with $\bR$.
	Note that the same notation $\bE$ is used for both operators as is the standard in probability theory.
\end{example}

\begin{example}[{cf.\ \Cref{section:Conditional_Results}}]
	\label{ex:conditional_mean_as_Markov_operator}
	Let $(\Omega_{1},\Sigma_{1},\bP_{1})=(\Omega_{2},\Sigma_{2},\bP_{2})$ be a probability space and $\cF \subseteq \Sigma_{1}$ be a sub-$\sigma$-algebra of $\Sigma_{1}$.
	Let $\cS_{1} = L^{1}(\Sigma_{1},\bP_{1})$ and $\cS_{2} = L^{1}(\cF,\bP_{1}|_{\cF})$.
	Further, let $\cZ$ be a Banach space, $\cS^{\cZ}_{1} = L^{1}(\Sigma_{1},\bP_{1};\cZ)$ and
	$\cS^{\cZ}_{2} = L^{1}(\cF,\bP|_{\cF};\cZ)$.
	Then the conditional expectation $\bE[\quark | \cF]\colon \cS^{\cZ}_{1} \to \cS^{\cZ}_{2}$ is a $\cZ$-valued Markov operator associated with the Markov operator $\bE[\quark | \cF]\colon \cS_{1} \to \cS_{2}$ (this follows easily from e.g.\ \cite[Theorem~II.2.6]{Diestel1977vectormeasures}).
	Again, the same notation $\bE[\quark | \cF]$ is used for both operators as is common practice.
\end{example}

To extend the findings in \Cref{section:Results} to Markov operators, we establish the subsequent assumptions for this section:

\begin{assumption}
	\label{assump:Markov_operators}
	$\cX,\cY$ are two real, Hausdorff, and locally convex topological vector spaces and $\Omega_{1},\, \Omega_{2},\, \cS_{1},\, \cS_{2},\, \cS^{\cX}_{1},\, \cS^{\cX}_{2},\, \cS^{\cY}_{1},\, \cS^{\cY}_{2}$ are as in \Cref{def:Markov_operators}, with $\cZ$ replaced by $\cX$ and $\cY$, respectively.
	Further, $\cE^{\cX} \colon \cS^{\cX}_{1} \to \cS^{\cX}_{2}$ and $\cE^{\cY} \colon \cS^{\cY}_{1} \to \cS^{\cY}_{2}$ are $\cX$-valued and $\cY$-valued Markov operators, respectively, associated with the \emph{same} Markov operator $\cE \colon \cS_{1} \to \cS_{2}$.
	$\leq$ is any binary relation on $\cY$, which we assume to be the canonical total order in the case $\cY = \bR$.
	Finally $K \subseteq \cX$, $f\colon K \to \cY$ and $X \colon \Omega_{1} \to K$ such that the pair $(X,f(X)) \in \cS^{\cX}_{1}\times \cS^{\cY}_{1}$. 
\end{assumption}

\begin{lemma}
	\label{lemma:pair_of_Markov_operators}
	Under \Cref{assump:Markov_operators}, the operator $\cE^{\cX \times \cY} \colon \cS^{\cX}_{1}\times \cS^{\cY}_{1} \to \cS^{\cX}_{2}\times \cS^{\cY}_{2},\ (\tilde{X},\tilde{Y}) \mapsto (\cE^{\cX}(\tilde{X}),\cE^{\cY}(\tilde{Y}))$ is an $(\cX\times\cY)$-valued Markov operator associated with the same Markov operator $\cE$.
\end{lemma}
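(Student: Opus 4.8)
The plan is to verify directly the defining property of an $(\cX\times\cY)$-valued Markov operator from \Cref{def:Markov_operators}, namely the commuting-diagram identity $\ell(\cE^{\cX\times\cY}(Z_{1})) = \cE(\ell(Z_{1}))$ for every $\ell \in (\cX\times\cY)'$ and every $Z_{1} \in \cS^{\cX}_{1}\times\cS^{\cY}_{1}$, together with linearity. The single conceptual ingredient is the standard fact that the continuous dual of a product of locally convex spaces decomposes as a product of duals: every $\ell \in (\cX\times\cY)'$ can be written as $\ell(x,y) = \ell_{\cX}(x) + \ell_{\cY}(y)$, where $\ell_{\cX}(x) \coloneqq \ell(x,0) \in \cX'$ and $\ell_{\cY}(y) \coloneqq \ell(0,y) \in \cY'$ are linear and continuous, being compositions of $\ell$ with the continuous embeddings $x \mapsto (x,0)$ and $y \mapsto (0,y)$.

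First I would check that the domain and codomain have the correct form, i.e.\ that a pair $(\tilde{X},\tilde{Y}) \in \cS^{\cX}_{1}\times\cS^{\cY}_{1}$, read as the single map $Z_{1}\colon \omega \mapsto (\tilde{X}(\omega),\tilde{Y}(\omega))$ into $\cX\times\cY$, indeed qualifies as an admissible element of an $\cS^{\cX\times\cY}_{1}$, i.e.\ satisfies $\ell(Z_{1}) \in \cS_{1}$ for every $\ell \in (\cX\times\cY)'$. Using the decomposition above, $\ell(Z_{1}) = \ell_{\cX}(\tilde{X}) + \ell_{\cY}(\tilde{Y})$, and both summands lie in $\cS_{1}$ by the defining membership property of $\cS^{\cX}_{1}$ and $\cS^{\cY}_{1}$; since $\cS_{1}$ is a vector space, the sum lies in $\cS_{1}$ as well. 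The identical argument on the $\Omega_{2}$-side shows $\cE^{\cX\times\cY}$ maps into $\cS^{\cX}_{2}\times\cS^{\cY}_{2}$ viewed inside an $\cS^{\cX\times\cY}_{2}$. Linearity of $\cE^{\cX\times\cY}$ is immediate from the componentwise linearity of $\cE^{\cX}$ and $\cE^{\cY}$.

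The core of the proof is then a short chain of equalities. Fixing $Z_{1} = (\tilde{X},\tilde{Y})$ and $\ell$ as above,
\[
\ell\big(\cE^{\cX\times\cY}(Z_{1})\big)
= \ell_{\cX}\big(\cE^{\cX}(\tilde{X})\big) + \ell_{\cY}\big(\cE^{\cY}(\tilde{Y})\big)
= \cE\big(\ell_{\cX}(\tilde{X})\big) + \cE\big(\ell_{\cY}(\tilde{Y})\big)
= \cE\big(\ell_{\cX}(\tilde{X}) + \ell_{\cY}(\tilde{Y})\big)
= \cE\big(\ell(Z_{1})\big),
\]
where the first equality is the definition of $\cE^{\cX\times\cY}$ together with the dual decomposition, the second uses that $\cE^{\cX}$ and $\cE^{\cY}$ are both associated with $\cE$ (the commuting diagram for each factor), the third uses linearity of $\cE$, and the last re-assembles the dual decomposition. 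Since this holds for every $\ell \in (\cX\times\cY)'$, it is precisely the statement that $\cE^{\cX\times\cY}$ is an $(\cX\times\cY)$-valued Markov operator associated with $\cE$.

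I do not expect a genuine obstacle here: the only step beyond bookkeeping is the dual-product decomposition $(\cX\times\cY)' = \cX'\times\cY'$, which holds for arbitrary Hausdorff locally convex $\cX,\cY$ and is exactly what allows a single scalar Markov operator $\cE$ to govern both coordinates simultaneously. The only care required is notational, namely distinguishing the pair $(\tilde{X},\tilde{Y})\in\cS^{\cX}_{1}\times\cS^{\cY}_{1}$ from its incarnation as a single $\cX\times\cY$-valued map and confirming that the membership conditions defining the product-valued function spaces are inherited from those of the two factors.
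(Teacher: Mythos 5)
Your proposal is correct and follows essentially the same route as the paper: decompose $\ell \in (\cX\times\cY)'$ as $\ell(x,y) = \ell^{\cX}(x) + \ell^{\cY}(y)$, check that $\ell(\tilde Z) = \ell^{\cX}(\tilde X) + \ell^{\cY}(\tilde Y) \in \cS_{1}$ since $\cS_{1}$ is a vector space, and then run the identical chain of equalities using that $\cE^{\cX}$ and $\cE^{\cY}$ are associated with the same $\cE$ together with linearity of $\cE$. Your explicit justification of the dual decomposition via the embeddings $x\mapsto(x,0)$ and $y\mapsto(0,y)$ is a minor addition the paper leaves implicit; otherwise the arguments coincide.
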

\begin{proof}
	Let $\tilde{Z} = (\tilde{X},\tilde{Y}) \in \cS^{\cX}_{1}\times \cS^{\cY}_{1}$ and $\ell \in (\cX\times\cY)'$.
	Then $\ell$ is of the form $\ell(x,y) = \ell^{\cX}(x)+\ell^{\cY}(y)$ for some $\ell^{\cX} \in \cX',\, \ell^{\cY} \in \cY'$.
	Hence, $\ell(\tilde{Z}) = \ell^{\cX}(\tilde{X}) + \ell^{\cY}(\tilde{Y}) \in \cS$ (since $\cS$ is a vector space), as required (and similarly for $\cS^{\cX}_{2}\times \cS^{\cY}_{2}$).
	Further,
	\[
	\ell(\cE^{\cX \times \cY}(\tilde{Z}))
	=
	\ell^{\cX} (\cE^{\cX}(\tilde{X}) + \ell^{\cY} (\cE^{\cY}(\tilde{Y})
	=
	\cE(\ell^{\cX}(\tilde{X})) + \cE (\ell^{\cY} (\tilde{Y}))
	=
	\cE(\ell(\tilde{Z})).
	\]
\end{proof}

Let us now formulate the analogue of \Cref{thm:mean_in_convex_hull} for Markov operators:

\begin{theorem}[{Markov operators respect closed convex set constraints}]
\label{thm:Markov_operators_respect_convex_hull}
Let $\Omega_{1},\, \Omega_{2},\, \cS_{1},\, \cS_{2},\, \cE$, $\cZ,\ \cS^{\cZ}_{1},\, \cS^{\cZ}_{2},\, \cE^{\cZ}$ be as in \Cref{def:Markov_operators} and assume that $\cZ$ is Hausdorff and locally convex.
Further, let $Z\in \cS_{1}$ with $Z(\Omega_{1}) \subseteq A$ for some subset $A\subseteq \cZ$.
Then $(\cE^{\cZ}(Z))(\Omega_{2}) \subseteq \overline{\CH(A)}$.
\end{theorem}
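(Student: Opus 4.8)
The plan is to establish the inclusion pointwise, showing $(\cE^{\cZ}(Z))(\omega_2) \in \overline{\CH(A)}$ for each $\omega_2 \in \Omega_2$, by reusing the Hahn--Banach separation idea behind \Cref{thm:mean_in_convex_hull}. First I would recall that in a Hausdorff locally convex space the closed convex set $\overline{\CH(A)}$ is the intersection of all closed half-spaces $\{z \in \cZ : \ell(z) \leq c\}$, $\ell \in \cZ'$, $c \in \bR$, that contain $A$; indeed, any such half-space is closed and convex, so it contains $A$ iff it contains $\overline{\CH(A)}$, and any point outside $\overline{\CH(A)}$ can be strictly separated from it. It therefore suffices to fix one separating pair $(\ell,c)$ with $\ell(a) \leq c$ for all $a \in A$ and to verify $\ell\bigl((\cE^{\cZ}(Z))(\omega_2)\bigr) \leq c$ for every $\omega_2$.

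The next step reduces the $\cZ$-valued claim to a scalar one via the defining commutation property $\ell(\cE^{\cZ}(Z)) = \cE(\ell(Z))$ of a $\cZ$-valued Markov operator. Since $Z(\omega_1) \in A$ for all $\omega_1$, the scalar function $\ell(Z)$ obeys $\ell(Z) \leq c\,\mathds{1}_{\Omega_1}$ pointwise, i.e.\ $c\,\mathds{1}_{\Omega_1} - \ell(Z) \geq 0$. Applying $\cE$ and invoking its linearity together with the normalization $\cE(\mathds{1}_{\Omega_1}) = \mathds{1}_{\Omega_2}$ and positivity gives $c\,\mathds{1}_{\Omega_2} - \cE(\ell(Z)) \geq 0$, hence $(\cE(\ell(Z)))(\omega_2) \leq c$. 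Composing with the commutation identity yields $\ell\bigl((\cE^{\cZ}(Z))(\omega_2)\bigr) = (\cE(\ell(Z)))(\omega_2) \leq c$; as $(\ell,c)$ ranges over all separating pairs, the point $(\cE^{\cZ}(Z))(\omega_2)$ lands in every half-space containing $A$ and thus in $\overline{\CH(A)}$.

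The work is concentrated not in any estimate but in making both ingredients legitimate. The half-space representation is precisely where the Hausdorff and local convexity hypotheses enter, since these guarantee that a continuous linear functional separates any exterior point from the closed convex set; without local convexity the statement can fail. I would also check that everything fed to the scalar operator $\cE$ actually lies in its domain $\cS_1$: the membership $\ell(Z) \in \cS_1$ holds by the definition of $\cS^{\cZ}_{1}$, and $c\,\mathds{1}_{\Omega_1} \in \cS_1$ because $\cS_1$ is a vector space containing $\mathds{1}_{\Omega_1}$, so $c\,\mathds{1}_{\Omega_1} - \ell(Z)$ is an admissible argument for the positivity axiom. Finally, unlike the finite-dimensional refinement in \Cref{thm:mean_in_convex_hull}, no version dropping the closure seems available here, because a general Markov operator carries no underlying probability measure to exploit and the argument only ever certifies membership in \emph{closed} half-spaces.
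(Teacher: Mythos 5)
Your proposal is correct and follows essentially the same route as the paper: represent $\overline{\CH(A)}$ as an intersection of closed half-spaces via Hahn--Banach, reduce to scalars through the commutation identity $\ell(\cE^{\cZ}(Z)) = \cE(\ell(Z))$, and use linearity, positivity and $\cE(\mathds{1}_{\Omega_1}) = \mathds{1}_{\Omega_2}$ to show each half-space constraint is preserved. Your additional checks (that $c\,\mathds{1}_{\Omega_1} - \ell(Z) \in \cS_1$, and the remark on why no closure-free version is available) are sensible elaborations of the same argument.
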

\begin{proof}
By the Hahn--Banach separation theorem, since $\overline{\CH(A)}$ is convex, it coincides with the intersection of all closed half-spaces $\cH_{\ell, s} \coloneqq \set{ z \in \cZ}{\ell(z)\geq s} \supseteq \overline{\CH(A)}$ including it, where $\ell \in \cZ'\setminus \{ 0 \}$ and $s \in \bR$.
Since, for $\omega \in \Omega$,
\begin{align*}
Z(\Omega_{1}) \subseteq \cH_{\ell,s}
&\implies
\ell(Z) \geq s
\\
&\implies
\cE(\ell(Z)) \geq s
\\
&\implies
\ell(\cE^{\cZ}(Z) \geq s
\\
&\implies
\cE^{\cZ}(Z)(\Omega_{2}) \subseteq \cH_{\ell,s},
\end{align*}
$(\cE^{\cZ}(Z))(\Omega_{2})$ lies in each half-space $\cH_{\ell, s}$ including $\overline{\CH(A)}$, proving the claim.
\end{proof}

An application of above theorem to the convex hull of the graph of a function $f$ yields:

\begin{corollary}
\label{cor:Markov_operators_respect_convex_hull_of_graph}
Under \Cref{assump:conditional},
$(\cE^{\cX}(X), \cE^{\cY}(f(X))) (\Omega_{2}) \subseteq \overline{\CH(\cG(f))}$.
\end{corollary}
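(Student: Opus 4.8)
The plan is to mirror the derivation of \Cref{cor:mean_in_convex_hull_of_graph} from \Cref{thm:mean_in_convex_hull}, now replacing the expected value by an arbitrary Markov operator and \Cref{thm:mean_in_convex_hull} by its operator analogue \Cref{thm:Markov_operators_respect_convex_hull}. The crucial observation, exactly as in the expected-value case, is that the pair $(X,f(X))$ is nothing but a map whose values lie in the graph $\cG(f)$; so applying the theorem with the product operator of \Cref{lemma:pair_of_Markov_operators} and $A \coloneqq \cG(f)$ yields the claim at once.

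Concretely, I would set $\cZ \coloneqq \cX \times \cY$, which is again a real, Hausdorff, locally convex topological vector space as a finite product of such. By \Cref{assump:Markov_operators} the pair $Z \coloneqq (X,f(X))$ lies in $\cS^{\cX}_{1} \times \cS^{\cY}_{1}$, and by \Cref{lemma:pair_of_Markov_operators} the product operator $\cE^{\cX \times \cY}$ is an $(\cX\times\cY)$-valued Markov operator associated with the same base Markov operator $\cE$. Thus all hypotheses of \Cref{thm:Markov_operators_respect_convex_hull} are in place, with $\cZ = \cX\times\cY$, $\cS^{\cZ}_{1} = \cS^{\cX}_{1}\times\cS^{\cY}_{1}$ and $\cE^{\cZ} = \cE^{\cX\times\cY}$.

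Next I would check the single substantive hypothesis, namely that $Z$ takes values in $\cG(f)$: for every $\omega\in\Omega_{1}$ one has $X(\omega)\in K$, hence $Z(\omega) = (X(\omega),f(X(\omega)))\in\cG(f)$, so that $Z(\Omega_{1})\subseteq\cG(f)$. Applying \Cref{thm:Markov_operators_respect_convex_hull} with $A\coloneqq\cG(f)$ then gives $(\cE^{\cX\times\cY}(Z))(\Omega_{2})\subseteq\overline{\CH(\cG(f))}$, and since by definition $\cE^{\cX\times\cY}(Z) = (\cE^{\cX}(X),\cE^{\cY}(f(X)))$, the corollary follows.

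The only point requiring any care --- and hence what I would regard as the \emph{main obstacle}, though it is a mild one --- is verifying that the product operator $\cE^{\cX\times\cY}$ really does satisfy every requirement of a $\cZ$-valued Markov operator so that \Cref{thm:Markov_operators_respect_convex_hull} applies verbatim. This is precisely the content of \Cref{lemma:pair_of_Markov_operators}, which was established in advance exactly for this purpose; once it is invoked, the corollary is a direct application with no further analytic work, the entire difficulty having been front-loaded into the theorem and the product-operator lemma.
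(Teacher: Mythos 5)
Your proposal is correct and follows exactly the paper's own argument: the paper's proof likewise applies \Cref{thm:Markov_operators_respect_convex_hull} to $Z \coloneqq (X,f(X))$ and $A \coloneqq \cG(f) \subseteq \cX\times\cY \eqqcolon \cZ$, using \Cref{lemma:pair_of_Markov_operators} to supply the product operator; you have merely spelled out the routine verifications that the paper leaves implicit. (Incidentally, the reference to \Cref{assump:conditional} in the statement appears to be a typo for \Cref{assump:Markov_operators}, which is the hypothesis you correctly work under.)
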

\begin{proof}
Using \Cref{lemma:pair_of_Markov_operators}, this follows from \Cref{thm:Markov_operators_respect_convex_hull} for $Z \coloneqq (X,f(X))$ and $A \coloneqq \cG(f) \subseteq \cX \times \cY \eqqcolon \cZ$.
\end{proof}

As in \Cref{section:Results}, a reformulation of above corollary results in graph convex hull bounds for Markov operators in place of expected values:

\begin{theorem}[{graph convex hull bounds on $\cE^{\cY}(f(X))$}]
\label{thm:main_result_Markov_operators}
Under \Cref{assump:conditional}, let the convex hull of the graph of $f$ be ``enclosed'' by two ``envelope'' functions $g_{l},g_{u} \colon \overline{\CH(K)} \to \overline{\cY}$ satisfying \eqref{equ:envelope_characterization_convex_hull}.
Then
\begin{equation}
\label{equ:main_result_Markov_operators}
g_{l}(\cE^{\cX}(X))
\leq
\cE^{\cY}(f(X))
\leq
g_{u}(\cE^{\cX}(X)).
\end{equation}
Note that, in contrast to \Cref{thm:main_result}, all objects in \eqref{equ:main_result_Markov_operators} are functions on $\Omega_{2}$.
\end{theorem}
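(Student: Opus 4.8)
The plan is to mirror the proof of \Cref{thm:main_result} line by line, replacing each expected-value statement by its Markov-operator analogue and reading the final inequality \eqref{equ:main_result_Markov_operators} pointwise on $\Omega_{2}$. Indeed, all three defining properties of $\bE$ that powered \Cref{section:Results} have already been abstracted into \Cref{thm:Markov_operators_respect_convex_hull}, so no fresh analytic work should be required; the argument reduces to a fixed-$\omega$ fiber inspection.

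First I would verify that the envelope functions are only ever evaluated where they are defined. Applying \Cref{thm:Markov_operators_respect_convex_hull} to the map $X$ itself, with $A \coloneqq K$, yields $(\cE^{\cX}(X))(\Omega_{2}) \subseteq \overline{\CH(K)}$. Hence $g_{l}(\cE^{\cX}(X)(\omega))$ and $g_{u}(\cE^{\cX}(X)(\omega))$ are well defined for every $\omega \in \Omega_{2}$, so that both sides of \eqref{equ:main_result_Markov_operators} make sense as functions on $\Omega_{2}$.

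Next, by \Cref{cor:Markov_operators_respect_convex_hull_of_graph} we have $(\cE^{\cX}(X), \cE^{\cY}(f(X)))(\Omega_{2}) \subseteq \overline{\CH(\cG(f))}$, a containment that holds pointwise. Fixing an arbitrary $\omega \in \Omega_{2}$ and writing $x \coloneqq \cE^{\cX}(X)(\omega)$ and $y \coloneqq \cE^{\cY}(f(X))(\omega)$, this says $(x,y) \in \overline{\CH(\cG(f))}$, so $y \in \overline{\CH(\cG(f))}|_{x}$. The envelope characterization \eqref{equ:envelope_characterization_convex_hull} gives $\overline{\CH(\cG(f))}|_{x} \subseteq [g_{l}(x), g_{u}(x)]$, whence $g_{l}(x) \leq y \leq g_{u}(x)$, that is, $g_{l}(\cE^{\cX}(X)(\omega)) \leq \cE^{\cY}(f(X))(\omega) \leq g_{u}(\cE^{\cX}(X)(\omega))$. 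As $\omega$ was arbitrary, this is precisely \eqref{equ:main_result_Markov_operators}.

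The step I expect to matter most is not a genuine obstacle but a point of interpretation: unlike in \Cref{thm:main_result}, where $\bE[f(X)]$ is a single element of $\cY$, here both sides are functions on $\Omega_{2}$, and the relation $\leq$ in \eqref{equ:main_result_Markov_operators} must be read as the pointwise extension to maps $\Omega_{2}\to\overline{\cY}$ of the relation $\leq$ on $\cY$. Once this is understood, the pointwise membership supplied by \Cref{cor:Markov_operators_respect_convex_hull_of_graph} collapses the whole statement to the one-point fiber argument above, carried out uniformly in $\omega$.
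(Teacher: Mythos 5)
Your proposal is correct and follows essentially the same route as the paper: both containments $(\cE^{\cX}(X))(\Omega_{2}) \subseteq \overline{\CH(K)}$ and $(\cE^{\cX}(X),\cE^{\cY}(f(X)))(\Omega_{2}) \subseteq \overline{\CH(\cG(f))}$ are obtained from \cref{thm:Markov_operators_respect_convex_hull} and \cref{cor:Markov_operators_respect_convex_hull_of_graph}, and the conclusion is read off fiberwise via \eqref{equ:envelope_characterization_convex_hull}. Your explicit fixed-$\omega$ unpacking and the remark that $\leq$ is to be interpreted pointwise on $\Omega_{2}$ merely make precise what the paper's slice notation $\overline{\CH(\cG(f))}|_{\cE^{\cX}(X)}$ leaves implicit.
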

\begin{proof}
Since $(\cE^{\cX}(X))(\Omega_{2}) \subseteq \overline{\CH(K)}$ and $( \cE^{\cX}(X),\cE^{\cY}(f(X)) )(\Omega_{2}) \subseteq \overline{\CH(\cG(f))}$ by \Cref{thm:Markov_operators_respect_convex_hull,cor:Markov_operators_respect_convex_hull_of_graph},
\[
(\cE^{\cY}(f(X)))(\Omega_{2})
\subseteq
\overline{\CH(\cG(f))}|_{\cE^{\cX}(X)} 
\subseteq
[g_{l}(\cE^{\cX}(X)),g_{u}(\cE^{\cX}(X))].
\]
\end{proof}

\begin{corollary}
\label{cor:further_Markov_operators_Jensen_type_inequalities}
Let the assumptions of \Cref{thm:main_result_Markov_operators} hold and let $\cY = \bR$.
Then, using the notation from \Cref{cor:further_Jensen_type_inequalities},
\vspace{1ex}
\begin{enumerate}[label = (\alph*)]
	\itemsep=2ex	
	\item
	\label{item:Markov_operators_Jensen_type_inequality_2}
	$c_{l}(\cE^{\cX}(X)) \, f(\cE^{\cX}(X))
	\leq
	\cE^{\cY}(f(X))
	\leq
	c_{u}(\cE^{\cX}(X)) \, f(\cE^{\cX}(X))$,
	\item
	\label{item:Markov_operators_Jensen_type_inequality_3}
	$
	\max\big(\inf_{x\in K} f(x) \, ,\, f(\cE^{\cX}(X)) - s_{u}\big)
	\leq
	\cE^{\cY}(f(X))
	\leq
	\min\big(\sup_{x\in K} f(x) \, ,\, f(\cE^{\cX}(X)) + s_{l}\big),$
	\item
	\label{item:Markov_operators_Jensen_type_inequality_4}
	$\hat{c}_{l} \, f(\cE^{\cX}(X))
	\leq
	\cE^{\cY}(f(X))
	\leq
	\hat{c}_{u} \, f(\cE^{\cX}(X))$,
\end{enumerate}
\vspace{1ex}
whenever these quantities are well defined.
\end{corollary}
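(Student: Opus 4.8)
The plan is to mirror the proof of \Cref{cor:further_Jensen_type_inequalities} line for line, the only difference being that every inequality is now read pointwise on $\Omega_{2}$ rather than as a single scalar inequality. The sole input I need is the envelope bound \eqref{equ:main_result_Markov_operators} of \Cref{thm:main_result_Markov_operators}, namely $g_{l}(\cE^{\cX}(X)) \leq \cE^{\cY}(f(X)) \leq g_{u}(\cE^{\cX}(X))$, together with the pointwise membership $(\cE^{\cX}(X))(\omega) \in \overline{\CH(K)}$ for every $\omega\in\Omega_{2}$, which \Cref{thm:Markov_operators_respect_convex_hull} already supplies (take $Z=X$, $A=K$). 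This membership is what lets me evaluate $f$, $c_{l}$, $c_{u}$ and the envelope functions at $(\cE^{\cX}(X))(\omega)$ wherever they are defined there. Since $\cE^{\cX}(X)$ and $\cE^{\cY}(f(X))$ are functions on $\Omega_{2}$, I would fix an arbitrary $\omega\in\Omega_{2}$, verify each asserted inequality at that point, and conclude that it holds as an inequality between functions.

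For part~\ref{item:Markov_operators_Jensen_type_inequality_2} I would simply note that, wherever $f$ does not vanish, $c_{l}(x)\,f(x)=g_{l}(x)$ and $c_{u}(x)\,f(x)=g_{u}(x)$ by the definition of $c_{l},c_{u}$ in \Cref{cor:further_Jensen_type_inequalities}. Substituting $x=(\cE^{\cX}(X))(\omega)$ then turns \eqref{equ:main_result_Markov_operators} into exactly the claimed inequality, so part~\ref{item:Markov_operators_Jensen_type_inequality_2} is a pure rewriting of the theorem; here the multiplication back by $f(x)$ causes no sign trouble because it reproduces $g_{l}$ and $g_{u}$ identically, regardless of the sign of $f(x)$.

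For part~\ref{item:Markov_operators_Jensen_type_inequality_3} I would derive the two competing families of bounds separately and combine them with $\max$ and $\min$. The constant bounds $\inf_{x\in K}f(x)\leq \cE^{\cY}(f(X))\leq \sup_{x\in K}f(x)$ follow from property~(i) and positivity~(ii) of the Markov operator applied to $\inf_{K}f\cdot\mathds{1}_{\Omega_{1}}\leq f(X)\leq \sup_{K}f\cdot\mathds{1}_{\Omega_{1}}$ (equivalently, from \Cref{thm:Markov_operators_respect_convex_hull} with $A=f(K)$). The additive bounds follow from \eqref{equ:main_result_Markov_operators} together with the uniform one-sided estimates $f-g_{l}\leq s_{l}$ and $g_{u}-f\leq s_{u}$, valid on $K$ because $g_{l}\leq f\leq g_{u}$ there while $s_{l}=\norm{f-g_{l}}_{\infty}$ and $s_{u}=\norm{f-g_{u}}_{\infty}$, again evaluated at $(\cE^{\cX}(X))(\omega)$. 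Part~\ref{item:Markov_operators_Jensen_type_inequality_4} then follows from part~\ref{item:Markov_operators_Jensen_type_inequality_2} by replacing the pointwise ratios with their uniform bounds $\hat{c}_{l}\leq c_{l}(x)$ and $c_{u}(x)\leq\hat{c}_{u}$ over $\overline{\CH(K)}$ and multiplying through by $f((\cE^{\cX}(X))(\omega))$.

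The only genuinely delicate step --- and the one I expect to be the main obstacle --- is the sign bookkeeping in passing from part~\ref{item:Markov_operators_Jensen_type_inequality_2} to part~\ref{item:Markov_operators_Jensen_type_inequality_4}: multiplying $\hat{c}_{l}\leq c_{l}(x)$ by $f(x)$ preserves the direction of the inequality only where $f(x)\geq 0$ and reverses it where $f(x)<0$. This is exactly the subtlety already present in the scalar \Cref{cor:further_Jensen_type_inequalities}, and it is absorbed into the proviso that all quantities be well defined; since $\hat{c}_{l}$ and $\hat{c}_{u}$ require $f$ to be non-vanishing on $\overline{\CH(K)}$, one works in the sign-consistent regime, after which the pointwise argument goes through verbatim. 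No new measure-theoretic ingredient is needed beyond the pointwise membership $(\cE^{\cX}(X))(\omega)\in\overline{\CH(K)}$, which is precisely what \Cref{thm:Markov_operators_respect_convex_hull} and \Cref{thm:main_result_Markov_operators} already deliver.
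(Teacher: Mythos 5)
Your proposal is correct and follows essentially the same route as the paper, whose entire proof is the observation that \ref{item:Markov_operators_Jensen_type_inequality_2} and \ref{item:Markov_operators_Jensen_type_inequality_3} follow directly from \Cref{thm:main_result_Markov_operators} (read pointwise on $\Omega_{2}$) and that \ref{item:Markov_operators_Jensen_type_inequality_4} follows from \ref{item:Markov_operators_Jensen_type_inequality_2}; your explicit attention to the pointwise membership $(\cE^{\cX}(X))(\Omega_{2})\subseteq\overline{\CH(K)}$ and to the sign of $f$ in the passage from \ref{item:Markov_operators_Jensen_type_inequality_2} to \ref{item:Markov_operators_Jensen_type_inequality_4} only spells out what the paper leaves implicit. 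One small mismatch worth noting: your one-sided estimates $f-g_{l}\leq s_{l}$ and $g_{u}-f\leq s_{u}$ yield $f(\cE^{\cX}(X))-s_{l}\leq\cE^{\cY}(f(X))\leq f(\cE^{\cX}(X))+s_{u}$, i.e.\ with $s_{l}$ and $s_{u}$ in the opposite slots from the printed statement of \ref{item:Markov_operators_Jensen_type_inequality_3}; this appears to be a typo in the statement itself (inherited verbatim from \Cref{cor:further_Jensen_type_inequalities}) rather than a gap in your argument.
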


\begin{proof}
\ref{item:Markov_operators_Jensen_type_inequality_2} and \ref{item:Markov_operators_Jensen_type_inequality_3} follow directly from \Cref{thm:main_result_Markov_operators}, while \ref{item:Markov_operators_Jensen_type_inequality_4} follows from \ref{item:Markov_operators_Jensen_type_inequality_2}.	
\end{proof}

We conclude this section by deriving Jensen's inequality as a straightforward corollary of \Cref{thm:main_result_Markov_operators}. To the best of the author's knowledge, Jensen's inequality for Markov operators has not been established with this level of generality, cf.\ \Cref{rem:def_of_Markov_operators_typical_assumptions}.

\begin{corollary}[Jensen's inequality for Markov operators]
	\label{cor:Jensen_follows_from_our_bounds_Markov}
	Let \Cref{assump:Markov_operators} hold and $f\colon \bR^{d} \to \bR$ be a convex function.
	Then $\cE^{\cY}(f(X)) \geq f(\cE^{\cX}(X))$.
\end{corollary}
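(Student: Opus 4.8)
The plan is to mirror the proof of \Cref{cor:Jensen_follows_from_our_bounds}, simply replacing the expected-value version of the graph convex hull bounds by its Markov-operator analogue \Cref{thm:main_result_Markov_operators}. Indeed, the entire point of the preceding development is that once \Cref{thm:main_result_Markov_operators} is available, Jensen's inequality should drop out by the same two-line argument that worked for expectations, with no new measure-theoretic input.

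First I would invoke \Cref{lemma:for_Jensen}: since $f\colon \bR^{d}\to\bR$ is convex on the convex domain $K=\bR^{d}$, and in particular the full-space case $K=\bR^{d}$ of that lemma applies, we obtain $\overline{\CH(\cG(f))}|_{x}\subseteq[f(x),\infty)$ for every $x\in\bR^{d}$. Next I would read off suitable envelope functions: taking $g_{l}=f$ and $g_{u}\equiv+\infty$ on $\overline{\CH(K)}=\bR^{d}$, the enclosure condition \eqref{equ:envelope_characterization_convex_hull} holds by the previous inclusion (the upper envelope being vacuously valid). Applying \Cref{thm:main_result_Markov_operators} with this choice then yields the lower half of \eqref{equ:main_result_Markov_operators}, namely $f(\cE^{\cX}(X))\leq\cE^{\cY}(f(X))$, which is exactly the claim—understood, as in \Cref{thm:main_result_Markov_operators}, as an inequality between functions on $\Omega_{2}$, i.e.\ holding pointwise at each $\omega\in\Omega_{2}$ where $(\cE^{\cX}(X))(\omega)\in\bR^{d}$.

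I do not expect a genuine obstacle, since the substantive work already resides in \Cref{thm:main_result_Markov_operators} (and hence in \Cref{thm:Markov_operators_respect_convex_hull}) and in \Cref{lemma:for_Jensen}. The only subtlety worth flagging is why we are entitled to bound the \emph{closed} convex hull $\overline{\CH(\cG(f))}$ that appears in the Markov-operator bound, rather than merely $\CH(\cG(f))$: this is precisely the reason the corollary hypothesizes $f$ on all of $\bR^{d}$, because on the full space $f$ is automatically continuous and $\epi(f)$ is closed, so \Cref{lemma:for_Jensen} controls $\overline{\CH(\cG(f))}|_{x}$ and not just $\CH(\cG(f))|_{x}$. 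Consequently the proof is a one-liner: the result follows directly from \Cref{thm:main_result_Markov_operators} and \Cref{lemma:for_Jensen}.
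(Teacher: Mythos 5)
Your proposal is correct and follows exactly the paper's own (one-line) proof, which likewise derives the corollary directly from \Cref{thm:main_result_Markov_operators} and \Cref{lemma:for_Jensen}; your choice of envelopes $g_{l}=f$, $g_{u}\equiv+\infty$ and your remark on why $K=\bR^{d}$ is needed to control the \emph{closed} convex hull simply make explicit what the paper leaves implicit.
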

\begin{proof}
	This follows directly from \Cref{thm:main_result_Markov_operators} and \Cref{lemma:for_Jensen}.
\end{proof}

\section{Graph Convex Hull Bounds for Conditional Expectations}
\label{section:Conditional_Results}

It is well-known that Jensen's inequality also holds for \emph{conditional expectations} \citep[Theorem~10.2.7]{Dudley2002real}.
In this section, we establish graph convex hull bounds for conditional expectations, representing a specific instance of the broader framework outlined in \Cref{section:Markov_Operators} (cf.\ \Cref{ex:conditional_mean_as_Markov_operator}) and deriving directly from the results therein.
Since conditional expectations are commonly defined over Banach spaces and for Bochner integrable random variables \citep[Section~V.1]{Diestel1977vectormeasures}, this will require slightly stronger assumptions:

\begin{assumption}
	\label{assump:conditional}
	$(\Omega,\Sigma,\bP)$ is a probability space, $\cX,\cY,\cZ$ are real Banach spaces and $K \subseteq \cX$ a subset of $\cX$.	
	Further, $\leq$ is any binary relation on $\cY$, which we assume to be the canonical total order in the case $\cY = \bR$ and $\cF \subseteq \Sigma$ is a sub-$\sigma$-algebra with $\bE[\quark | \cF]$ denoting the corresponding conditional expectation.
	Finally, $K \subseteq \cX$, $f\colon K \to \bR$ and $X \colon \Omega \to K$ is a random variable such that the pair $(X,f(X))$ is Bochner integrable.
\end{assumption}

\begin{theorem}[{conditional expectations respect closed convex set constraints}]
\label{thm:conditional_mean_in_convex_hull}
Let $Z \colon (\Omega,\Sigma,\bP) \to A$ be a Bochner integrable random variable taking values in a subset $A\subseteq \cZ$ of a real Banach space $\cZ$ and let $\cF \subseteq \Sigma$ be a sub-$\sigma$-algebra.
Then $\bE[Z | \cF] \in \overline{\CH(A)}$ $\bP$-almost surely.
\end{theorem}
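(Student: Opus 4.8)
The plan is to reproduce the Hahn--Banach half-space argument underlying \Cref{thm:Markov_operators_respect_convex_hull}, but to compensate for the fact that the conditional expectation is only defined up to $\bP$-null sets. The commutation identity $\ell(\bE[Z|\cF]) = \bE[\ell(Z)|\cF]$ holds, for each fixed $\ell \in \cZ'$, only $\bP$-almost surely, with an exceptional null set depending on $\ell$ (this is exactly the Markov-operator property recorded in \Cref{ex:conditional_mean_as_Markov_operator}). Since $\overline{\CH(A)}$ is in general an intersection of \emph{uncountably} many closed half-spaces, one cannot simply take the union of the associated null sets. The heart of the argument is therefore to replace this uncountable family by a countable one, which is made possible by the separability forced by Bochner integrability.

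First I would reduce to a separable subspace. As $Z$ is Bochner integrable, it is strongly measurable, hence essentially separably valued; writing $R$ for its essential range, there is a separable closed subspace $\cZ_{0}\subseteq\cZ$ with $Z\in R\subseteq\cZ_{0}$ $\bP$-almost surely. A standard property of the vector-valued conditional expectation (cf.\ the citation in \Cref{ex:conditional_mean_as_Markov_operator}, \citep[Theorem~II.2.6]{Diestel1977vectormeasures}) then yields $\bE[Z|\cF]\in\cZ_{0}$ $\bP$-almost surely, so the entire statement may be analysed inside $\cZ_{0}$. Since $R\subseteq\overline{A}\subseteq\overline{\CH(A)}$ and the latter is closed and convex, we have $\overline{\CH(R)}\subseteq\overline{\CH(A)}$; hence it suffices to prove $\bE[Z|\cF]\in C$ $\bP$-a.s., where $C\coloneqq\overline{\CH(R)}$ is a norm-closed convex subset of the separable space $\cZ_{0}$.

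The key geometric step is to represent $C$ as a \emph{countable} intersection of closed half-spaces. The complement $\cZ_{0}\setminus C$ is open, and by the Hahn--Banach separation theorem each $z\notin C$ is strictly separated from $C$ by a half-space $H_{z}=\set{w\in\cZ_{0}}{\ell_{z}(w)\leq c_{z}}\supseteq C$, so that $z$ lies in the open set $V_{z}\coloneqq\set{w\in\cZ_{0}}{\ell_{z}(w)>c_{z}}\subseteq\cZ_{0}\setminus C$. Because $\cZ_{0}$ is separable metric, hence Lindel\"of, the cover $\{V_{z}\}_{z\notin C}$ of $\cZ_{0}\setminus C$ admits a countable subcover $\{V_{z_{n}}\}_{n\in\bN}$, and taking complements gives $C=\bigcap_{n\in\bN}H_{z_{n}}$ with $\ell_{z_{n}}\in\cZ_{0}'$.

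Finally I would run the half-space argument countably many times. For each $n$, $Z\in R\subseteq C\subseteq H_{z_{n}}$ $\bP$-a.s.\ gives $\ell_{z_{n}}(Z)\leq c_{z_{n}}$ $\bP$-a.s.; monotonicity of the real-valued conditional expectation yields $\bE[\ell_{z_{n}}(Z)|\cF]\leq c_{z_{n}}$ $\bP$-a.s., and the commutation property turns this into $\ell_{z_{n}}(\bE[Z|\cF])\leq c_{z_{n}}$ off a null set $N_{n}$. On the complement of the null set $N\coloneqq\bigcup_{n}N_{n}$ all these inequalities hold simultaneously, so $\bE[Z|\cF]\in\bigcap_{n}H_{z_{n}}=C\subseteq\overline{\CH(A)}$, as claimed. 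I expect the main obstacle to be precisely the reduction carried out in the third paragraph: making rigorous that strong measurability confines everything to a separable subspace, so that the closed convex hull collapses to a countable intersection of half-spaces and the per-functional null sets can be absorbed into a single one.
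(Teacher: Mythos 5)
Your proof is correct, and it handles the key technical difficulty differently from the paper. The paper deduces the statement from \cref{thm:Markov_operators_respect_convex_hull} by checking the two Markov-operator properties (commutation with functionals via Hille's theorem, positivity via monotonicity of $\bE[\cdot|\cF]$), and then disposes of the resulting problem --- that each functional $\ell$ contributes its own exceptional null set, and the half-spaces cutting out $\overline{\CH(A)}$ may be uncountable in number --- in a separate remark, by asserting that the construction of the conditional expectation in \citep[Theorem~V.1.4]{Diestel1977vectormeasures} furnishes a single representative lying in all half-spaces at once. You instead attack the uncountability head-on: Bochner integrability forces $Z$ (and $\bE[Z|\cF]$) into a separable closed subspace $\cZ_0$, the Lindel\"of property of $\cZ_0\setminus C$ reduces the closed convex hull to a \emph{countable} intersection of half-spaces, and the per-functional null sets can then be unioned. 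This buys a fully self-contained, representative-independent argument where the paper leans on the reader accepting a property of a particular construction; the trade-off is that your argument is intrinsically tied to the Banach/separable setting, whereas the paper's half-space argument is formulated so as to also cover the general Markov-operator case. Two small points to tighten: (i) the citation \citep[Theorem~II.2.6]{Diestel1977vectormeasures} is Hille's theorem on commuting closed operators with the Bochner integral, which is the right reference for $\ell(\bE[Z|\cF])=\bE[\ell(Z)|\cF]$ but not for the claim $\bE[Z|\cF]\in\cZ_0$ a.s.; the latter should be justified separately (e.g.\ $\bE[Z|\cF]$ is an $L^1$-limit, hence an a.e.\ limit along a subsequence, of conditional expectations of $\cZ_0$-valued simple functions, which are $\cZ_0$-valued), and note that this step genuinely cannot be skipped, since your separating functionals live in $\cZ_0'$ and a closed separable subspace of a non-separable $\cZ$ need not be a countable intersection of kernels of elements of $\cZ'$; (ii) you should record that each $\ell_{z_n}\in\cZ_0'$ extends to $\cZ'$ by Hahn--Banach so that the commutation identity, stated for $\ell\in\cZ'$, applies, which is harmless precisely because both $Z$ and $\bE[Z|\cF]$ take values in $\cZ_0$ almost surely.
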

\begin{proof}
Note that, as is typical for conditional expectations, no difference is made in the notation between $\bE[\quark | \cF] \colon L^{1}(\Sigma,\bP) \to L^{1}(\cF,\bP|_{\cF})$ and $\bE[\quark | \cF] \colon L^{1}(\Sigma,\bP;\cZ) \to L^{1}(\cF,\bP|_{\cF};\cZ)$ (corresponding to $\cE$ and $\cE_{\cZ}$ in \Cref{def:Markov_operators}).
Since $\bE[\ell(Z)| \cF] = \ell( \bE[Z | \cF] )$ for each $\ell \in \cZ'$ (this follows easily from e.g.\ \cite[Theorem~II.2.6]{Diestel1977vectormeasures}), and that $W\geq s$ implies $\bE[W | \cF]\geq s$ $\bP$-almost surely for each real-valued random variable $W \colon (\Omega,\Sigma,\bP) \to \bR$ by \cite[Theorem~8.1(ii)]{Kallenberg2021foundations}, the claim follows directly from \Cref{thm:Markov_operators_respect_convex_hull}.
\end{proof}
\begin{remark}
In contrast to \Cref{section:Markov_Operators}, all statements in $L^{1}$ can only hold $\bP$-almost surely.
Hence, when drawing the final conclusion from the proof of \Cref{thm:Markov_operators_respect_convex_hull} to the one of \Cref{thm:conditional_mean_in_convex_hull}, we must exercise caution when considering the intersection over all (possibly uncountably many) half-spaces that contain $\overline{\CH(A)}$.
However, following the construction of conditional expectations \citep[Theorem~V.1.4]{Diestel1977vectormeasures}, there exists a representative of $\bE[Z|\cF]$ which lies in all these half-spaces simultaneously.
\end{remark}

This theorem's application to the convex hull of the graph of a function $f$ yields:

\begin{corollary}
\label{cor:conditional_mean_in_convex_hull_of_graph}
Under \Cref{assump:conditional}, $\bE[(X,f(X))| \cF]
\in
\overline{\CH(\cG(f))}$ $\bP$-almost surely.
\end{corollary}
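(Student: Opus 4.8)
The plan is to recognize that \Cref{cor:conditional_mean_in_convex_hull_of_graph} stands in exactly the same relationship to \Cref{thm:conditional_mean_in_convex_hull} as \Cref{cor:mean_in_convex_hull_of_graph} does to \Cref{thm:mean_in_convex_hull}, and as \Cref{cor:Markov_operators_respect_convex_hull_of_graph} does to \Cref{thm:Markov_operators_respect_convex_hull}. In each case the corollary is obtained by a single substitution: apply the general ``operator respects closed convex set constraints'' theorem to the paired random variable $Z \coloneqq (X,f(X))$ and to the set $A \coloneqq \cG(f)$. So the whole proof is a one-line specialization, and the only real content is checking that the hypotheses of \Cref{thm:conditional_mean_in_convex_hull} are met by this choice of $Z$ and $A$.

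Concretely, I would set $\cZ \coloneqq \cX \times \cY$, which is again a real Banach space (under any of the standard product norms, e.g.\ $\norm{(x,y)}_{\cZ} = \norm{x}_{\cX} + \norm{y}_{\cY}$), so the ambient-space requirement of \Cref{thm:conditional_mean_in_convex_hull} holds. Next I would note that, by \Cref{assump:conditional}, the pair $Z = (X,f(X))$ is Bochner integrable by assumption, which is precisely the integrability hypothesis the theorem demands. The set $A = \cG(f) = \set{(x,f(x))}{x\in K} \subseteq \cZ$ clearly contains the range of $Z$: for every $\omega \in \Omega$ we have $Z(\omega) = (X(\omega), f(X(\omega))) \in \cG(f)$ since $X(\omega) \in K$. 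With all hypotheses verified, \Cref{thm:conditional_mean_in_convex_hull} yields $\bE[Z \mid \cF] \in \overline{\CH(\cG(f))}$ $\bP$-almost surely, and since $\bE[(X,f(X)) \mid \cF] = \bE[Z \mid \cF]$ this is exactly the claim.

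The only point that warrants a moment's care is the identification $\bE[(X,f(X)) \mid \cF] = (\bE[X\mid\cF], \bE[f(X)\mid\cF])$, i.e.\ that the conditional expectation of the $\cZ$-valued pair is the pair of conditional expectations, so that the object on the left of the corollary statement genuinely coincides with $\bE[Z\mid\cF]$. This componentwise behavior follows from the linearity of the Bochner conditional expectation together with its commutation with the continuous coordinate projections $\cX \times \cY \to \cX$ and $\cX \times \cY \to \cY$ (equivalently, from the defining relation $\ell(\bE[Z\mid\cF]) = \bE[\ell(Z)\mid\cF]$ for all $\ell \in \cZ'$, applied to functionals factoring through the two projections), and is entirely analogous to \Cref{lemma:pair_of_Markov_operators} in the Markov-operator setting. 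I expect this to be the only nontrivial verification, and even it is routine; there is no genuine obstacle, as the substantive work has already been carried out in \Cref{thm:conditional_mean_in_convex_hull}. Accordingly I would write the proof as: this follows from \Cref{thm:conditional_mean_in_convex_hull} applied to $Z \coloneqq (X,f(X))$ and $A \coloneqq \cG(f) \subseteq \cX \times \cY \eqqcolon \cZ$.
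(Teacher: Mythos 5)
Your proposal is correct and matches the paper's own proof, which is exactly the one-line specialization you describe: apply \Cref{thm:conditional_mean_in_convex_hull} to $Z \coloneqq (X,f(X))$ and $A \coloneqq \cG(f) \subseteq \cX \times \cY \eqqcolon \cZ$. The additional hypothesis checks you record (product Banach space, Bochner integrability of the pair, componentwise behavior of the conditional expectation via functionals factoring through the projections) are all sound and consistent with the paper's framework.
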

\begin{proof}
This follows from \Cref{thm:conditional_mean_in_convex_hull} for $Z \coloneqq (X,f(X))$ and $A \coloneqq \cG(f) \subseteq \cX \times \cY \eqqcolon \cZ$.
\end{proof}

\begin{theorem}[{graph convex hull bounds on $\bE[f(X)| \cF]$}]
\label{thm:main_result_conditional}
Under \Cref{assump:conditional}, let the convex hull of the graph of $f$ be ``enclosed'' by two ``envelope'' functions $g_{l},g_{u} \colon \overline{\CH(K)} \to \overline{\cY}$ satisfying \eqref{equ:envelope_characterization_convex_hull}.
Then
\begin{equation}
\label{equ:main_result_conditional}
g_{l}(\bE[X | \cF])
\leq
\bE[f(X) | \cF]
\leq
g_{u}(\bE[X | \cF]) \text{ $\bP$-almost surely}.
\end{equation}
Note that, in contrast to \Cref{thm:main_result}, all objects in \eqref{equ:main_result_conditional} are random variables.
\end{theorem}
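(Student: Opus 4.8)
The plan is to follow the proofs of \Cref{thm:main_result} and \Cref{thm:main_result_Markov_operators} essentially verbatim, replacing the expected value (resp.\ the Markov operator) by the conditional expectation and carrying the ``$\bP$-almost surely'' qualifier along throughout. The two geometric ingredients are already in place: applying \Cref{thm:conditional_mean_in_convex_hull} to $Z = X$ with $A = K$ gives $\bE[X | \cF] \in \overline{\CH(K)}$ $\bP$-almost surely, and \Cref{cor:conditional_mean_in_convex_hull_of_graph} gives $\bE[(X,f(X)) | \cF] \in \overline{\CH(\cG(f))}$ $\bP$-almost surely.

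First I would record that the conditional expectation acts componentwise on the pair, i.e.\ $\bE[(X,f(X)) | \cF] = (\bE[X | \cF],\, \bE[f(X) | \cF])$ $\bP$-almost surely. Under the identification of the conditional expectation as a Markov operator (\Cref{ex:conditional_mean_as_Markov_operator}), this is exactly the content of \Cref{lemma:pair_of_Markov_operators}, whose pair operator is built precisely as $(\tilde{X}, \tilde{Y}) \mapsto (\bE[\tilde{X} | \cF], \bE[\tilde{Y} | \cF])$; alternatively, it follows at once from the relation $\ell(\bE[\quark | \cF]) = \bE[\ell(\quark) | \cF]$ already used in the proof of \Cref{thm:conditional_mean_in_convex_hull}.

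Next I would invoke the envelope characterization \eqref{equ:envelope_characterization_convex_hull}, which guarantees that every point $(x,y) \in \overline{\CH(\cG(f))}$ satisfies $x \in \overline{\CH(K)}$ and $g_{l}(x) \leq y \leq g_{u}(x)$. Taking the union of the two exceptional null sets above (which is again $\bP$-null) and working on its complement, I would apply this pointwise at the random point $(\bE[X | \cF], \bE[f(X) | \cF])$ to conclude
\[
\bE[f(X) | \cF] \in \overline{\CH(\cG(f))}|_{\bE[X | \cF]} \subseteq [g_{l}(\bE[X | \cF]),\, g_{u}(\bE[X | \cF])]
\]
$\bP$-almost surely, which is precisely \eqref{equ:main_result_conditional}.

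The only genuine subtlety --- and the step I expect to require the most care --- is the almost-sure bookkeeping, already flagged in the remark following \Cref{thm:conditional_mean_in_convex_hull}. In contrast to \Cref{thm:main_result_Markov_operators}, where the containment held everywhere on $\Omega_{2}$, each containment here holds only off a $\bP$-null set, and one must additionally fix measurable representatives of $\bE[X | \cF]$ and $\bE[f(X) | \cF]$ so that the displayed inequality is a genuine almost-sure statement about random variables rather than a relation between $L^{1}$-equivalence classes. The construction of the conditional expectation of a Bochner integrable random variable guarantees that such representatives can be chosen simultaneously, so this point causes no real difficulty.
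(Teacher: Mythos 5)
Your proposal matches the paper's own proof: both deduce $\bE[X|\cF] \in \overline{\CH(K)}$ and $(\bE[X|\cF],\bE[f(X)|\cF]) \in \overline{\CH(\cG(f))}$ almost surely from \cref{thm:conditional_mean_in_convex_hull} and \cref{cor:conditional_mean_in_convex_hull_of_graph}, and then read off \eqref{equ:main_result_conditional} from the envelope characterization \eqref{equ:envelope_characterization_convex_hull}. Your additional care about uniting the exceptional null sets and fixing representatives is exactly the point the paper itself flags in the remark following \cref{thm:conditional_mean_in_convex_hull}, so the argument is correct and complete.
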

\begin{proof}
Since $\bE[X | \cF] \in \overline{\CH(K)}$ $\bP$-almost surely and $(\bE[X | \cF],\bE[f(X) | \cF]) \in \overline{\CH(\cG(f))}$ $\bP$-almost surely by \Cref{thm:conditional_mean_in_convex_hull,cor:conditional_mean_in_convex_hull_of_graph},
\[
\bE[f(X) | \cF]
\subseteq
\overline{\CH(\cG(f))}|_{\bE[X | \cF]} 
\subseteq
[g_{l}(\bE[X | \cF]),g_{u}(\bE[X | \cF])] \text{ $\bP$-almost surely}.
\]
\end{proof}

\begin{corollary}
\label{cor:further_conditional_Jensen_type_inequalities}
Let the assumptions of \Cref{thm:main_result_conditional} hold and let $\cY = \bR$.
Then, using the notation from \Cref{cor:further_Jensen_type_inequalities},
\begin{enumerate}[label = (\alph*)]
	\itemsep=2ex	
	\item
	\label{item:conditional_Jensen_type_inequality_2}
	$c_{l}(\bE[X| \cF]) \, f(\bE[X| \cF])
	\leq
	\bE[f(X)]
	\leq
	c_{u}(\bE[X| \cF]) \, f(\bE[X| \cF])$,
	\item
	\label{item:conditional_Jensen_type_inequality_3}
	$
	\max\big(\inf_{x\in K} f(x) \, ,\, f(\bE[X| \cF]) - s_{u}\big)
	\leq
	\bE[f(X)]
	\leq
	\min\big(\sup_{x\in K} f(x) \, ,\, f(\bE[X| \cF]) + s_{l}\big),$
	\item
	\label{item:conditional_Jensen_type_inequality_4}
	$\hat{c}_{l} \, f(\bE[X| \cF])
	\leq
	\bE[f(X)]
	\leq
	\hat{c}_{u} \, f(\bE[X | \cF])$,
\end{enumerate}
\vspace{1ex}
$\bP$-almost surely, whenever these quantities are well defined.
\end{corollary}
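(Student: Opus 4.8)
The plan is to reproduce, almost verbatim, the reasoning behind the two preceding corollaries \Cref{cor:further_Jensen_type_inequalities,cor:further_Markov_operators_Jensen_type_inequalities}, reducing each of \ref{item:conditional_Jensen_type_inequality_2}--\ref{item:conditional_Jensen_type_inequality_4} to an algebraic rearrangement of the conditional graph convex hull bound already established in \Cref{thm:main_result_conditional}. Recall that the latter furnishes, $\bP$-almost surely,
\[
g_{l}(\bE[X | \cF]) \leq \bE[f(X) | \cF] \leq g_{u}(\bE[X | \cF]),
\]
evaluated at the $\cF$-measurable random variable $\bE[X | \cF] \in \overline{\CH(K)}$; every assertion below is obtained by rewriting this single chain.

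First I would dispatch \ref{item:conditional_Jensen_type_inequality_2} by pure substitution: by the definitions $g_{l} = c_{l}\, f$ and $g_{u} = c_{u}\, f$ (valid wherever $f \neq 0$), evaluating the displayed bound at $\bE[X | \cF]$ converts it verbatim into \ref{item:conditional_Jensen_type_inequality_2}. For \ref{item:conditional_Jensen_type_inequality_3} I would intersect two families of bounds. On the one hand, the uniform deviations $s_{l} = \norm{f - g_{l}}_{\infty}$ and $s_{u} = \norm{f - g_{u}}_{\infty}$ quantify how far the envelopes stray from $f$, so rewriting the envelope values through them yields the shift terms $f(\bE[X | \cF]) - s_{u}$ and $f(\bE[X | \cF]) + s_{l}$. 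On the other hand, since $f$ ranges in $[\inf_{K} f, \sup_{K} f]$, applying \Cref{thm:conditional_mean_in_convex_hull} to the scalar random variable $f(X)$ (with $A = f(K)$) gives $\inf_{K} f \leq \bE[f(X) | \cF] \leq \sup_{K} f$ almost surely; taking the larger of the two lower bounds and the smaller of the two upper bounds produces the $\max$ and $\min$ in \ref{item:conditional_Jensen_type_inequality_3}. Finally, \ref{item:conditional_Jensen_type_inequality_4} follows from \ref{item:conditional_Jensen_type_inequality_2} exactly as in the two earlier corollaries, by replacing $c_{l}(\bE[X | \cF])$ and $c_{u}(\bE[X | \cF])$ with the uniform bounds $\hat{c}_{l} \leq c_{l}$ and $\hat{c}_{u} \geq c_{u}$ and multiplying through by $f(\bE[X | \cF])$, which preserves the direction of the inequalities under the implicit sign assumption on $f$.

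The step I expect to require the most care — and the only genuine departure from \Cref{cor:further_Jensen_type_inequalities} — is the almost-sure bookkeeping, since every quantity in \ref{item:conditional_Jensen_type_inequality_2}--\ref{item:conditional_Jensen_type_inequality_4} is now a random variable and each inequality holds only off a $\bP$-null set. Because only finitely many inequalities are combined, I would intersect the corresponding full-measure events and fix, once and for all, the representative of $\bE[f(X) | \cF]$ produced by \Cref{thm:main_result_conditional}. The proviso ``whenever these quantities are well defined'' must then be read as excluding, on this same full-measure set, the events on which $f(\bE[X | \cF]) = 0$ (so that $c_{l}, c_{u}$ are meaningful) or on which $f(\bE[X | \cF])$ carries the wrong sign for the multiplication in \ref{item:conditional_Jensen_type_inequality_4}. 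Once these exceptional events are discarded, the three statements follow precisely as their unconditional counterparts in \Cref{cor:further_Jensen_type_inequalities}.
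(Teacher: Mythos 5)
Your proposal is correct and follows exactly the route of the paper's own one-line proof: items \ref{item:conditional_Jensen_type_inequality_2} and \ref{item:conditional_Jensen_type_inequality_3} are direct rearrangements of the conditional bound $g_{l}(\bE[X|\cF]) \leq \bE[f(X)|\cF] \leq g_{u}(\bE[X|\cF])$ from \Cref{thm:main_result_conditional}, and \ref{item:conditional_Jensen_type_inequality_4} follows from \ref{item:conditional_Jensen_type_inequality_2}; your explicit intersection of the finitely many full-measure events and the fixing of a representative of $\bE[\quark|\cF]$ only spell out what the paper leaves implicit. One caveat, inherited from the statement itself (and already present in \Cref{cor:further_Jensen_type_inequalities}): the envelope argument actually produces the shifts $f(\bE[X|\cF]) - s_{l}$ and $f(\bE[X|\cF]) + s_{u}$, so the roles of $s_{l}$ and $s_{u}$ in item \ref{item:conditional_Jensen_type_inequality_3} appear to be crossed, and your ``rewriting the envelope values'' step reproduces that labelling rather than deriving it.
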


\begin{proof}
\ref{item:conditional_Jensen_type_inequality_2} and \ref{item:conditional_Jensen_type_inequality_3} follow directly from \Cref{thm:main_result_conditional}, while \ref{item:conditional_Jensen_type_inequality_4} follows from \ref{item:conditional_Jensen_type_inequality_2}.	
\end{proof}

Again, conditional Jensen's inequality follows almost directly from \Cref{thm:main_result_conditional}:
\begin{corollary}[conditional Jensen's inequality]
	\label{cor:Jensen_follows_from_our_bounds_conditional}	
	Let \Cref{assump:conditional} hold and $f\colon \bR^{d} \to \bR$ be a convex function.
	Then $\bE[f(X)|\cF] \geq f(\bE[X|\cF])$ $\bP$-almost surely.
\end{corollary}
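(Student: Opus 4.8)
The plan is to imitate the two preceding Jensen corollaries (\Cref{cor:Jensen_follows_from_our_bounds,cor:Jensen_follows_from_our_bounds_Markov}) and obtain the statement as an immediate specialization of the conditional graph convex hull bound \Cref{thm:main_result_conditional}, feeding it the envelope functions supplied by \Cref{lemma:for_Jensen}. First I would record the identifications $\cX = \bR^{d}$, $\cY = \bR$ and $K = \bR^{d}$, under which $\overline{\CH(K)} = \CH(K) = \bR^{d}$, and observe that \Cref{assump:conditional} is in force with $(X,f(X))$ Bochner integrable, so that the conditional expectations $\bE[X | \cF]$ and $\bE[f(X) | \cF]$ are well defined.

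Next I would apply the $K = \bR^{d}$ branch of \Cref{lemma:for_Jensen}, which gives $\overline{\CH(\cG(f))}|_{x} \subseteq [f(x),\infty)$ for every $x \in \bR^{d}$. This exhibits an admissible pair of envelope functions for the hypothesis \eqref{equ:envelope_characterization_convex_hull} of \Cref{thm:main_result_conditional}, namely the lower envelope $g_{l} \coloneqq f$ together with the trivial upper envelope $g_{u} \equiv +\infty$; indeed $[f(x),\infty) \subseteq [g_{l}(x),g_{u}(x)]$ holds by construction. Invoking \Cref{thm:main_result_conditional} with this choice, its conclusion \eqref{equ:main_result_conditional} becomes $f(\bE[X | \cF]) \leq \bE[f(X) | \cF] \leq +\infty$ $\bP$-almost surely, whose nontrivial half is precisely the asserted conditional Jensen inequality $\bE[f(X) | \cF] \geq f(\bE[X | \cF])$.

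The only non-formal point, and the one I expect to merit the most attention, concerns the composite object $f(\bE[X | \cF])$ appearing in \eqref{equ:main_result_conditional}: one should confirm both that $\bE[X | \cF]$ takes values in the domain of $f$ --- here automatic, since that domain is all of $\bR^{d}$ and $\bE[X | \cF]$ is an $\cX$-valued random variable --- and that $f \circ \bE[X | \cF]$ is genuinely $\cF$-measurable. The latter follows because a finite convex function on $\bR^{d}$ is continuous \citep[Theorem~4.1.3]{Borwein2006convex}, hence Borel measurable, while $\bE[X | \cF]$ is $\cF$-measurable by construction; the almost-sure qualifier is then inherited verbatim from \Cref{thm:main_result_conditional}. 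Beyond these bookkeeping checks the argument is a pure substitution into an already-proved bound, so no estimation or separation argument is required at this stage --- the Hahn--Banach work having been discharged once and for all in \Cref{thm:Markov_operators_respect_convex_hull} and transferred to the conditional setting in \Cref{thm:conditional_mean_in_convex_hull}.
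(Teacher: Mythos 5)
Your proposal is correct and follows exactly the paper's route: the paper's own proof is the one-liner ``this follows directly from \Cref{thm:main_result_conditional} and \Cref{lemma:for_Jensen},'' which is precisely your substitution of $g_{l} = f$ and $g_{u} \equiv +\infty$ (justified by the $K = \bR^{d}$ branch of \Cref{lemma:for_Jensen}) into \eqref{equ:main_result_conditional}. Your additional bookkeeping on the $\cF$-measurability of $f(\bE[X|\cF])$ via continuity of finite convex functions is a reasonable extra check that the paper leaves implicit.
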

\begin{proof}
	This follows directly from \Cref{thm:main_result_conditional} and \Cref{lemma:for_Jensen}.
\end{proof}


\section*{Acknowledgments}
\addcontentsline{toc}{section}{Acknowledgements}

This research was funded by the Deutsche Forschungsgemeinschaft (DFG, German Research Foundation) under Germany's Excellence Strategy (EXC-2046/1, project 390685689) through the project EF1-10 of the Berlin Mathematics Research Center MATH+.
The author thanks Tim Sullivan for carefully proofreading this manuscript and for helpful suggestions.

\bibliographystyle{abbrvnat}
\bibliography{myBibliography}
\addcontentsline{toc}{section}{References}

\end{document}